\title{The Cross Number of Minimal Zero-sum Sequences in Finite Abelian Groups}
\author{Bumsoo Kim\footnote{bumsook@princeton.edu}\\
Princeton University \\ Department of Mathematics}
\def\be{\begin{enumerate}}
\def\ii{\item}
\def\ee{\end{enumerate}}
\def\ms{\mathsf}
\def\ord{\operatorname{ord}}
\def\lcm{\operatorname{lcm}}
\def\N{\mathbb{N}}
\def\Z{\mathbb{Z}}
\theoremstyle{plain}
\newtheorem{thm}{Theorem}
\newtheorem{conj}[thm]{Conjecture}
\newtheorem{lem}[thm]{Lemma}
\newtheorem{cor}[thm]{Corollary}
\theoremstyle{definition}
\newtheorem{defn}[thm]{Definition}
\begin{document}
\maketitle

\begin{abstract}
We study the maximal cross number $\mathsf{K}(G)$ of a minimal zero-sum sequence and the maximal cross number $\mathsf{k}(G)$ of a zero-sum free sequence over a finite abelian group $G$, defined by Krause and Zahlten. In the first part of this paper, we extend a previous result by X. He to prove that the value of $\mathsf{k}(G)$ conjectured by Krause and Zahlten hold for $G \bigoplus C_{p^a} \bigoplus C_{p^b}$ when it holds for $G$, provided that $p$ and the exponent of $G$ are related in a specific sense. In the second part, we describe a new method for proving that the conjectured value of $\mathsf{K}(G)$ hold for abelian groups of the form $H_p \bigoplus C_{q^m}$ (where $H_p$ is any finite abelian $p$-group) and $C_p \bigoplus C_q \bigoplus C_r$ for any distinct primes $p,q,r$. We also give a structural result on the minimal zero-sum sequences that achieve this value.
\end{abstract}

\section{Introduction}

The following notations are adapted from \cite{BC}, \cite{H}, \cite{Kriz}.

Let $(G,+)$ be a finite abelian group written additively. For any subset $G_0 \subseteq G$, denote by $\mathcal{G}(G_0)$ to be the multiplicative free abelian group generated by $G_0$. Similarly, we define $\mathcal{F}(G_0) \subseteq \mathcal{G}(G_0)$ to be the multiplicative free abelian monoid generated by $G_0$. A \textit{sequence} over $G_0$ is an element of $\mathcal{F}(G_0)$. We may write elements of $\mathcal{G}(G_0)$ in the form
$$S = \prod_{g \in G_0} g^{v_g(S)}$$
where $v_g: \mathcal{G}(G_0) \rightarrow \mathbb{Z}$ is the \textit{valuation} function for $g$, satisfying $v_g(S)=0$ for all but finitely many $g$ given any fixed $S$, and $v_g(S) \geq 0$ for all $g \in G_0$ if $S \in \mathcal{F}(G_0)$. The identity $1$ of the monoid $\mathcal{F}(G_0)$ is the unique sequence satisfying $v_g(1) = 0$ for all $g \in G_0$. Given two sequences $S,T \in \mathcal{F}(G_0)$, we say that $T$ is a \textit{subsequence} of $S$, or \textit{divides} $S$, if $v_g(T) \leq v_g(S)$ for all $g \in G_0$. In such a case we may also write $T | S$. We say $T$ is a \textit{proper} subsequence, or a $\textit{proper divisor}$ if $T | S$ and $T \neq S$.


The $sum$ function $\sigma : \mathcal{F}(G) \rightarrow G$ is defined on a sequence $S$ as
$$\sigma(S) = \sum_{g \in G} v_g(S) \cdot g.$$


Define the \textit{set of subsums}, or \textit{sumset} $\Sigma(S)$ of a sequence $S$ to be the set
$$\Sigma(S) = \{ {\sigma} (T): ~ 1 \neq T, T | S \}.$$

A sequence $S$ is \textit{zero-sum} if $\sigma(S) = 0$; \textit{zero-sum free} if the only sequence $T | S$ with $\sigma(T) = 0$ is $1$; and \textit{minimal zero-sum}, if it differs from 1, is zero-sum, and has no nontrivial zero-sum sequence as proper divisors.

When $S$ is a sequence over $G$, let $|S| = \sum_{g \in G^{\star}} v_g(S)$. This is referred to as the \textit{length} of $S$. Define the \textit{cross number} of a sequence $S$ to be

$$ \mathsf{k}(S) = \sum_{g \in G} \frac{v_g(S)}{\ord(g)}.$$ 

It is often more natural to study the cross number of a sequence than to study its length, see \cite{BC}.


The \textit{little cross number of} $G$ is defined as
$$ \mathsf{k}(G) = \max \lbrace \mathsf{k}(S): S \text{ is zero-sum free over } G \rbrace.$$

The \textit{cross number of} $G$ is defined as
$$ \mathsf{K}(G) = \max \lbrace \mathsf{k}(S): S \text{ is minimal zero-sum over } G \rbrace.$$

The constants $\mathsf{k}(G)$ and $\mathsf{K}(G)$, and how minimal zero-sum sequences $S$ with $\ms{k}(S) = \ms{K}(G)$ do look like, have been studied in the literature since more than 20 years. 

For a positive integer $n>1$, define $P^{-}(n)$ to be the smallest prime dividing $n$. It can be immediately verified that 
$$\ms{k}(G) + \frac{1}{\exp(G)} \leq \ms{K}(G) \leq \ms{k}(G) + \frac{1}{P^{-}(\exp(G))},$$
by observing that removing any element from a minimal zero-sum sequence leaves a zero-sum free sequence.

Writing $G$ as a direct sum of prime power order cyclic groups
$$ G = \bigoplus_{i=1}^{r} C_{p_i^{\alpha_i}}, $$
we define
$$ \mathsf{k}^\star (G) = \sum_{i=1}^{r} \left( 1 - \frac{1}{p_i^{\alpha_i}}\right),$$
and
$$ \mathsf{K}^\star (G) = \ms{k}^\star (G) + \frac{1}{\exp(G)}.$$ 

These are the conjectured values of $\ms{k}(G)$ and $\ms{K}(G)$ respectively. In this context, facts about $\ms{K}(G)$ imply facts about $\ms{k}(G)$, since if

$$\ms{K}(G) = \ms{K}^\star (G) = \ms{k}^\star (G) + \frac{1}{\exp{G}},$$ we clearly have $\ms{k}(G) = \ms{k}^\star (G)$ as well.

Krause and Zahlten \cite{KZ} conjectured the following. (See \cite{GG}, \cite{Girard} and \cite{H} for the most recent progress on the conjecture.)

\begin{conj}
\label{conj:main}
The equality $\mathsf{K}(G) = \mathsf{K}^{\star}(G)$ holds for all finite abelian groups $G$, and therefore $\mathsf{k}(G) = \mathsf{k}^{\star}(G)$ holds for all finite abelian groups $G$ as well.
\end{conj}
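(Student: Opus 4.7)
The plan is to attack $\mathsf{K}(G) = \mathsf{K}^\star(G)$ directly, since the excerpt already notes that this implies $\mathsf{k}(G) = \mathsf{k}^\star(G)$. The upper bound $\mathsf{K}(G) \leq \mathsf{K}^\star(G)$ is the nontrivial direction, and I would split it into two tasks: (i) establish $\mathsf{k}(H_p) = \mathsf{k}^\star(H_p)$ for every finite abelian $p$-group $H_p$, and (ii) show that these bounds combine across coprime direct sums to yield $\mathsf{K}(G) \leq \mathsf{k}^\star(G) + 1/\exp(G)$ in general. Granted both, the Sylow decomposition $G = \bigoplus_p G_p$ together with the additivity of $\mathsf{k}^\star$ over coprime factors delivers the conjecture.

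Task (i) is naturally attacked by induction on the rank of $H_p$. The base case (cyclic $p$-groups) is immediate from $\mathsf{K}(C_{p^a}) = 1$. The inductive step is essentially the content of Part 1 of the paper, which lifts the conjecture from $H$ to $H \oplus C_{p^a} \oplus C_{p^b}$ under a hypothesis relating $p$ to $\exp(H)$. To push this through in full generality I would aim to remove that hypothesis by a finer analysis of the extremal zero-sum free sequences in the lower-rank case, exploiting any structural rigidity they exhibit (for instance, forcing a large portion of the sequence to lie in a fixed cyclic summand, which would then allow the existing extension argument to go through without the arithmetic condition on $p$).

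Task (ii) is the harder step. Part 2 of the paper handles $H_p \oplus C_{q^m}$ and $C_p \oplus C_q \oplus C_r$ via a new technique for mixed-prime minimal zero-sum sequences, together with a structural characterization of extremal sequences. I would iterate that technique to treat $H_p \oplus H_q$ for general $q$-groups $H_q$, then set up a further induction on the number of distinct prime divisors of $|G|$ to reach arbitrary finite abelian groups. The structural result on extremal sequences should serve as the induction anchor, because it constrains how the projections of an extremal sequence to each Sylow subgroup can look, and hence how their cross numbers can pool.

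The principal obstacle is precisely this mixed-prime coupling. Although a minimal zero-sum sequence $S$ over $G_1 \oplus G_2$ projects to zero-sum sequences over $G_1$ and $G_2$, these projections are typically very far from minimal, and classifying which pairs of projections arise from a single minimal $S$ is not well understood. Any approach must therefore either sidestep this classification by proving a cross-number bound insensitive to the decomposition of $S$, or pin the coupling down directly; Part 2 of the paper makes progress on the latter in low rank, but extending such control uniformly across all primes dividing $|G|$, and across all ranks of the Sylow factors simultaneously, is what has kept the Krause--Zahlten conjecture open for over two decades.
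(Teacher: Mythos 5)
There is a fundamental mismatch here: the statement you are asked to prove is Conjecture~\ref{conj:main}, which is an \emph{open conjecture} --- the paper does not prove it and does not claim to. It only establishes the conjecture for the special families in Theorem~\ref{thm:result1} and Theorem~\ref{thm:result2}, and the concluding section explicitly says a full proof ``seems far away.'' Your submission is likewise not a proof but a research programme, and you concede as much in your final paragraph: the ``principal obstacle'' of mixed-prime coupling is named but not overcome. Both of your tasks defer the decisive work. In Task (ii), ``iterate that technique to treat $H_p \oplus H_q$ for general $q$-groups $H_q$, then set up a further induction on the number of distinct prime divisors'' is exactly the step nobody knows how to carry out; the paper's Part 2 method leans hard on the target component being cyclic ($C_{q^m}$, or $C_q \oplus C_r$ handled one prime at a time), because the projection-and-amalgamation argument needs every subsequence of cross number at least $1$ in the projection to contain a zero-sum subsequence of cross number at most $1$, which is a special feature of cyclic groups and fails already for $C_q \oplus C_q$.

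There is also a concrete technical error in Task (i). You propose to prove $\mathsf{k}(H_p) = \mathsf{k}^\star(H_p)$ for all finite abelian $p$-groups by induction on rank using Part 1 of the paper. But Theorem~\ref{thm:result1} requires $p \prec^2 \exp(G)$, and by Definition~\ref{defn:2small} the relation $p \prec^2 n$ presupposes $p \nmid n$; the theorem adjoins a rank-two $p$-component to a group $G$ whose exponent is \emph{coprime} to $p$. It therefore cannot be used to climb from a $p$-group of lower rank to one of higher rank. Fortunately this does not matter: the $p$-group case is already settled (Geroldinger, cited as part (1) of Theorem~\ref{thm:prevresult}), so Task (i) is known --- but that only underlines that all of the difficulty, and all of the gap in your proposal, sits in Task (ii), which remains open.
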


In this paper we study the constants $\mathsf{k}(G)$ and $\ms{K}(G)$, and the structures of the minimal zero-sum sequences $S$ with $\ms{k}(S) = \ms{K}(G)$.

\section{Previous Results}

It is not difficult to see that sequences of sufficient length or cross number over any nontrivial $G$ will not be zero-sum free, so $\mathsf{k}(G)$ and $\mathsf{K}(G)$ are finite for all $G$.

The lower bound $\mathsf{K}(G) \geq \mathsf{K}^{\star}(G)$ is known by construction \cite{KZ}, so to prove Conjecture \ref{conj:main}, it suffices to prove
$$\mathsf{K}(G) \leq \mathsf{K}^{\star} (G).$$
This has already been shown in the following special cases.


\begin{thm}
\label{thm:prevresult}
If $G$ is a group of one of the following forms, then $\mathsf{K}(G) = \mathsf{K}^{\star}(G)$, and thus $\mathsf{k}(G) = \mathsf{k}^{\star}(G)$.
\be
\ii \cite{G2} $G$ is a finite abelian p-group.
\ii \cite{GS2} $G = C_{p^m} \bigoplus C_{p^n} \bigoplus C_q^s$ with distinct primes $p,q$ and $m,n,s \in \mathbb{N}$ 
\ii \cite{GS2} $G = \bigoplus_{i=1}^{r} C_{p_i^{n_i}} \bigoplus C_q^s$ with distinct primes $p_1, p_2, \dots, p_r, q,$ and integers $n_1, \dots, n_r$, $s \in \mathbb{N}$, such that either $r \leq 3$ and $p_1 p_2 \cdots p_r \neq 30$ or $p_k \geq k^3$ for every $1 \leq k \leq r$.
\ee
\end{thm}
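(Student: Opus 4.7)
The plan is to treat the three parts separately, since they correspond to results from distinct papers, though they share a common inductive skeleton.

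For part (i), I would induct on the p-rank $r$ of $G = \bigoplus_{i=1}^r C_{p^{\alpha_i}}$. The base case $r=1$ is immediate. For the inductive step, choose a cyclic direct summand $H \cong C_{p^{\alpha_r}}$ so that $G/H \cong \bigoplus_{i=1}^{r-1} C_{p^{\alpha_i}}$. Given a minimal zero-sum sequence $S$ over $G$, project to $\bar{S}$ over $G/H$ and partition $S$ into maximal blocks whose projections already sum to zero in $G/H$. Each such block lifts to a sequence whose sum lies in $H$, yielding an auxiliary sequence in $H$; applying the inductive hypothesis to $\bar{S}$ together with the cyclic-group bound for the $H$-sequence should recover $\ms{K}^\star(G)$. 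The subtle point is to ensure the single $+1/\exp(G)$ term is charged exactly once, coming from the one "overall" zero-sum relation rather than each block.

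For part (ii), where $G = C_{p^m} \oplus C_{p^n} \oplus C_q^s$, I would exploit the canonical decomposition $g = g_p + g_q$ with $\ord(g) = \ord(g_p)\ord(g_q)$, so that $\ms{k}$ factors multiplicatively in the denominator. Given a minimal zero-sum $S$, I would project to the $q$-part and invoke Olson's result on the Davenport constant of elementary abelian $q$-groups to partition $S$ into a small controlled number of blocks whose $q$-projections are zero-sum. On each block the $p$-projection gives a zero-sum over $C_{p^m} \oplus C_{p^n}$, to which part (i) applies. The main obstacle is a matching argument: one must pair up the blocks so that the "boundary" contributions from each don't collectively over-shoot $\ms{k}^\star + 1/\exp(G)$. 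This is where the special rank-$2$ structure of the $p$-part seems to be used crucially, since the corresponding Davenport constant is exactly $p^m + p^n - 1$.

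Part (iii) generalizes (ii) by peeling off one prime at a time in an induction on the number $r$ of distinct primes appearing in the non-$q$ part. At each stage one combines a Davenport-type estimate for the current prime with the p-group bound (i) inside the appropriate Sylow summand. I expect the condition $p_k \geq k^3$ (and the $p_1 p_2 \cdots p_r \neq 30$ exclusion in the low-rank cases) to arise precisely to control the accumulated error: as more primes are combined, the naive bounds on cross-prime zero-sums degrade, and one either needs the primes to grow fast enough for the induction to close, or must rule out the finitely many sporadic small-prime configurations by direct inspection. I would expect the bookkeeping in this final step to be the hardest part of the whole theorem.
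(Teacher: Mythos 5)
First, a point of orientation: the paper does not prove this theorem at all --- it is quoted from Geroldinger \cite{G2} (part (i)) and Geroldinger--Schneider \cite{GS2} (parts (ii) and (iii)), so there is no in-paper argument to compare yours against. Judged on its own terms, your sketch of part (i) has a genuine gap, and since (ii) and (iii) lean on (i), the gap propagates. The quotient-and-blocks induction does not close: if you split a minimal zero-sum sequence $S$ over $G$ into blocks $B_1,\dots,B_t$ whose projections to $G/H$ are minimal zero-sum plus a remainder $R$ with zero-sum free projection, then since projection can only decrease orders the best termwise bound is $\ms{k}(S) \le \sum_{i} \ms{k}(\bar{B_i}) + \ms{k}(\bar{R})$, where each $\ms{k}(\bar{B_i})$ can be as large as $\ms{K}(G/H)$ and $t$ can be as large as $\ms{D}(H)-1 = p^{\alpha_r}-1$. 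The resulting estimate is of order $t\cdot\ms{K}(G/H)+\ms{k}(G/H)$, which overshoots $\ms{K}^{\star}(G)$ by an enormous margin. The issue you flag as ``subtle'' --- charging the $+1/\exp(G)$ exactly once --- is in fact the entire difficulty, and no bookkeeping within this framework recovers it, for the same reason the analogous naive induction fails to give Olson's value of the Davenport constant of a $p$-group. The actual proof in \cite{G2} is not an induction on rank but a group-algebra argument in the spirit of Olson: one works in $\mathbb{F}_p[G]$, exploits the nilpotency of the augmentation ideal together with $(X^g-1)^{\ord(g)}=0$ to weight each term of the sequence by $\exp(G)/\ord(g)$, and reads off $\ms{K}(G)\le\ms{K}^{\star}(G)$ from the fact that the product associated to a minimal zero-sum sequence cannot vanish prematurely.

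For parts (ii) and (iii) your outline is closer in spirit to \cite{GS2}: they do reduce to the $p$-group case by projecting away the remaining primes, and the hypotheses $p_k\ge k^3$ and the exclusion of $p_1p_2\cdots p_r=30$ do come from a reciprocal-sum inequality (note $\tfrac12+\tfrac13+\tfrac15>1$), exactly as you guess. But the step you describe as ``a matching argument: one must pair up the blocks so that the boundary contributions don't collectively over-shoot'' is precisely the content of their proof and is left entirely unspecified in your proposal. As written, you have identified the right inputs (Davenport constants of the elementary $q$-component and of the rank-two $p$-component, plus part (i)), but supplied no mechanism that actually controls the cross number contributed by the terms of mixed order, so none of the three parts is established.
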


He \cite{H} has shown an inductive result on the little cross number $\ms{k}(G)$. In order to state his result, we need the following definition introduced by He.

\begin{defn}
\label{defn:wide}
(See \cite{H}) A prime $p$ is \textit{wide with respect to} an integer $n$ if $p \nmid n$ and the inequality
\begin{equation}
\label{eq:pnwide}
\frac{p}{p-1} \geq \sum_{d|n} \frac{1}{d}
\end{equation}
holds. Writing $n = p_1^{d_1} p_2^{d_2} \cdots p_k^{d_k}$, this inequality can be rewritten as
\begin{equation}
\frac{p}{p-1} \geq \prod_{j=1}^{r} \frac{p_i^{d_i + 1} - 1}{p_i^{d_i +1} - p_i^{d_i}},
\end{equation}
in which case we write $p \prec n$. The empty product is taken to be 1.
\end{defn}

\begin{thm}
\label{thm:heinduct}
(See \cite{H})
Given a finite abelian group $G$, let $\exp(G)$ denote the least common multiple of the orders of all elements of $G$. Given a prime $p$ and a finite abelian group $G$, if $p \prec \exp(G)$
then
\begin{equation}
\label{eq:wideinduct}
\mathsf{k}(C_{p^\alpha} \bigoplus G) = \mathsf{k}(C_{p^\alpha}) + \mathsf{k}(G)
\end{equation}
for all $\alpha \in \mathbb{N}$. In particular, if $\mathsf{k}(G) = \mathsf{k}^\star(G)$ then $\mathsf{k}(C_{p^\alpha} \bigoplus G) = \mathsf{k}^\star(C_{p^\alpha} \bigoplus G)$ as well.
\end{thm}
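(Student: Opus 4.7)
The inequality $\mathsf{k}(C_{p^\alpha} \oplus G) \geq \mathsf{k}(C_{p^\alpha}) + \mathsf{k}(G)$ is immediate from concatenating optimal zero-sum free sequences over each factor (embedded in the direct sum): the concatenation stays zero-sum free because $p \nmid \exp(G)$ forces $\gcd(p^\alpha, |G|) = 1$, and the two cross numbers add.

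For the reverse inequality, I would take a zero-sum free sequence $S$ over $C_{p^\alpha} \oplus G$ and partition its terms by $C_{p^\alpha}$-coordinate: for each $a \in C_{p^\alpha}$, let $A_a$ denote the multisequence over $G$ of the $G$-coordinates of those terms of $S$ whose first coordinate is $a$. The coprimality of $\ord(a)$ and $\ord(b)$ (again a consequence of $p \nmid \exp(G)$) yields the clean decomposition
\begin{equation*}
\mathsf{k}(S) \;=\; \mathsf{k}(A_0) \;+\; \sum_{a \in C_{p^\alpha} \setminus \{0\}} \frac{\tilde{\mathsf{k}}(A_a)}{\ord(a)},
\end{equation*}
where $\tilde{\mathsf{k}}(A_a)$ weights each occurrence of $0 \in G$ in $A_a$ by $1$ and each nonzero $b$ by $1/\ord(b)$.

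Zero-sum-freeness of $S$ then imposes two structural constraints: (i) $A_0$ is zero-sum free over $G$, so $\mathsf{k}(A_0) \leq \mathsf{k}(G)$; and (ii) for each $a \neq 0$ of order $n = p^j$, no zero-sum subsequence of $A_a$ has length divisible by $n$. The plan is to exploit (ii) via iterated extraction: decompose $A_a = Z_1 \cdots Z_{k_a} \cdot U_a$ with each $Z_i$ a minimal zero-sum over $G$ and $U_a$ zero-sum free over $G$. Constraint (ii) then forces the length vector $(|Z_1|, \dots, |Z_{k_a}|)$, reduced modulo $n$, to itself be a zero-sum free sequence in $\mathbb{Z}/n$. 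Combining this secondary zero-sum-freeness with the bound $\mathsf{k}(U_a) \leq \mathsf{k}(G)$ and with the elementary estimate $\mathsf{k}(Z_i) \leq \mathsf{K}(G) \leq \mathsf{k}(G) + 1/P^{-}(\exp(G))$ from the introduction should produce an upper bound on $\tilde{\mathsf{k}}(A_a)$ of the shape $\mathsf{k}(G) + C_a$, where the correction $C_a$ is expressible as a weighted sum of $1/d$ over divisors $d \mid \exp(G)$.

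Summing these bounds against the weights $1/\ord(a)$ and using $\sum_{a \in C_{p^\alpha}} 1/\ord(a) \leq p/(p-1)$ (a geometric series over element orders in $C_{p^\alpha}$), I would close the argument with the wide hypothesis $p/(p-1) \geq \sum_{d \mid \exp(G)} 1/d$, which is designed exactly to absorb the aggregated correction into the slack $1 - 1/p^\alpha = \mathsf{k}(C_{p^\alpha})$. The main obstacle will be the quantitative form of step (ii): producing the correction $C_a$ with a divisor-sum structure that matches the right-hand side of the wide inequality, so that aggregating over $a$ compares term-by-term against $\sum_{d \mid \exp(G)} 1/d$. Making this matching sharp enough that the wide hypothesis is invoked as equality in the extremal configuration—where $S$ is a concatenation of optimal zero-sum free sequences from the two factors—is where the delicate combinatorics will reside.
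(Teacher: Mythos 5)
A point of orientation first: the paper never proves Theorem~\ref{thm:heinduct} --- it is quoted verbatim from He \cite{H} --- so the only material to measure your plan against is the machinery the paper does reproduce (Lemma~\ref{lem:amal} and Lemma~\ref{lem:1rep}) and the parallel proof of Theorem~\ref{thm:result1}, which follows He's template. That template is quite different from yours: one takes a \emph{dense} extremal zero-sum free sequence $S$ over $C_{p^\alpha} \bigoplus G$, uses the Amalgamation Lemma to show $S$ has at most $p-1$ terms of each order divisible by $p$, uses the Replacement Lemma to show it has at least $p-1$ terms of each order $p, p^2, \dots, p^\alpha$ (this is the only place $p \prec \exp(G)$ enters, as a comparison of $\tfrac{p}{p-1}$ against $\sum_{d \mid m} \tfrac1d$ for $m$ a divisor of $\exp(G)$, \emph{not} against any sum over elements of $C_{p^\alpha}$), concludes that the subsequence $S'$ supported on the $p$-component attains $\ms{k}(S') = \ms{k}(C_{p^\alpha})$ so that $\Sigma(S') \supseteq C_{p^\alpha} \setminus \{0\}$, and finally observes that $S(S')^{-1}$ must project to a zero-sum free sequence over $G$.

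Your proposal has two genuine gaps. The smaller one is arithmetic: $\sum_{a \in C_{p^\alpha}} 1/\ord(a) = 1 + \alpha\bigl(1 - \tfrac1p\bigr)$, since there are $p^j - p^{j-1}$ elements of order $p^j$; this exceeds $\tfrac{p}{p-1}$ for essentially all $\alpha \geq 2$. The geometric series you have in mind, $\sum_{j=0}^{\alpha} p^{-j} \leq \tfrac{p}{p-1}$, runs over the $\alpha+1$ distinct \emph{orders}, not over the $p^\alpha$ elements, and your decomposition of $\ms{k}(S)$ is indexed by elements. The larger gap is structural: the per-fiber constraints (i) and (ii) are strictly weaker than zero-sum freeness of $S$, and they do not imply the theorem. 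Concretely, take $\alpha = 1$, let $A_0$ be an optimal zero-sum free sequence over $G$, and let a single nonzero fiber $A_a$ consist of $p-1$ copies of $0_G$ together with another optimal zero-sum free sequence over $G$. Both (i) and (ii) hold --- the only zero-sum subsequences inside $A_a$ sit among the $p-1$ copies of $0_G$ and have length less than $p$ --- yet the associated cross number is $\ms{k}(G) + \tfrac{(p-1) + \ms{k}(G)}{p} > \ms{k}(G) + \ms{k}(C_p)$. This configuration is of course not actually zero-sum free, but the obstruction is a subsequence drawing $p$ terms from the fiber of $a$ and completing the $G$-sum with terms from $A_0$; your constraints never look at subsequences straddling two fibers, so no bound derived from (i) and (ii) alone can close the argument. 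Any repair must quantify these cross-fiber zero-sums, which is precisely what the amalgamation/replacement mechanism does, so I would regard the fiber-by-fiber plan as unworkable as stated rather than merely incomplete.
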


Combining Theorem~\ref{thm:heinduct} with Theorem~\ref{thm:prevresult}, we can show Conjecture~\ref{conj:main} for more general cases. For $a,b \in \mathbb{N}$, $[a,b]$ denotes the set of integers $\lbrace m: a \leq m \leq b \rbrace$.

\begin{thm}
\label{thm:hegroup}
If $G$ is a finite abelian group of one of the following forms, then $\ms{k}(G) = \ms{k}^{\star}(G)$.
\be
\ii \cite{H} $G = \bigoplus_{i=1}^{r} C_{p_i^{n_i}} \bigoplus {H_q}$ with distinct primes  $p_1, p_2, \dots, p_r, q,$ and integers $n_1, \dots, n_r \in \mathbb{N}$, such that $H_q$ is a $q$-group and if $\exp(H_q) = q^n$, then $p_i \prec p_{i+1}^{n_{i+1}} p_{i+2}^{n_{i+2}} \cdots {p_r}^{n_r} q^n$ for every $1 \leq i \leq r$.
\ii \cite{H} $G = \bigoplus_{i=1}^{r} C_{p_i^{n_i}} \bigoplus C_{p_{r+1}^{n_{r+1}}} \bigoplus C_{p_{r+1}^{n_{r+1}^\star}} \bigoplus {C_q}^s$ with distinct primes \\ $p_1, p_2, \dots, p_r, p_{r+1}, q$ and integers $n_1, \dots, n_r, n_{r+1}, n_{r+1}^\star, s \in \mathbb{N}$, such that $p_i \prec p_{i+1}^{n_{i+1}} \cdots p_{r+1}^{n_{r+1}} q$ for all $i \in [1,r]$ and $n_{r+1} \geq n_{r+1}^\star$.
\ee
\end{thm}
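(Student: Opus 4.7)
The plan is to prove both parts by iterated application of Theorem~\ref{thm:heinduct}, peeling off one cyclic $p_i$-group factor at a time from the largest prime index downward, starting from a base group already covered by Theorem~\ref{thm:prevresult}.

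For part (i), I would set $G_{r+1} = H_q$ and, for $i$ ranging from $r$ down to $0$, let $G_i = C_{p_i^{n_i}} \bigoplus G_{i+1}$ (with $G_0 = G$). The base case $\ms{k}(H_q) = \ms{k}^\star(H_q)$ follows from Theorem~\ref{thm:prevresult}(i), since $H_q$ is a $p$-group. Now, because the primes $p_1,\dots,p_r,q$ are pairwise distinct, the exponent of $G_{i+1}$ is exactly
\[
\exp(G_{i+1}) \;=\; p_{i+1}^{n_{i+1}} p_{i+2}^{n_{i+2}} \cdots p_r^{n_r} \, q^n,
\]
and by hypothesis $p_i \prec p_{i+1}^{n_{i+1}} \cdots p_r^{n_r} q^n = \exp(G_{i+1})$, so Theorem~\ref{thm:heinduct} applies and gives $\ms{k}(G_i) = \ms{k}(C_{p_i^{n_i}}) + \ms{k}(G_{i+1})$. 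Iterating from $i=r$ down to $i=1$ and using that $\ms{k}(C_{p_i^{n_i}}) = 1 - 1/p_i^{n_i} = \ms{k}^\star(C_{p_i^{n_i}})$ (which itself follows from Theorem~\ref{thm:prevresult}(i) applied to cyclic $p$-groups), we obtain
\[
\ms{k}(G) \;=\; \sum_{i=1}^{r}\Bigl(1 - \tfrac{1}{p_i^{n_i}}\Bigr) + \ms{k}^\star(H_q) \;=\; \ms{k}^\star(G),
\]
where the last equality uses that $\ms{k}^\star$ is additive on direct sums whose cyclic components have orders drawn from distinct primes (which is the case here).

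For part (ii), the strategy is identical, with the base group replaced by $G_{r+1} = C_{p_{r+1}^{n_{r+1}}} \bigoplus C_{p_{r+1}^{n_{r+1}^\star}} \bigoplus C_q^s$. Since $n_{r+1} \geq n_{r+1}^\star$, this group is of the form $C_{p^m} \bigoplus C_{p^n} \bigoplus C_q^s$ covered by Theorem~\ref{thm:prevresult}(ii), which yields $\ms{K}(G_{r+1}) = \ms{K}^\star(G_{r+1})$ and hence $\ms{k}(G_{r+1}) = \ms{k}^\star(G_{r+1})$. The exponent of the partial sum $G_{i+1} = \bigoplus_{j>i} C_{p_j^{n_j}} \bigoplus G_{r+1}$ is $p_{i+1}^{n_{i+1}} \cdots p_{r+1}^{n_{r+1}} q$, again matching the hypothesis $p_i \prec p_{i+1}^{n_{i+1}} \cdots p_{r+1}^{n_{r+1}} q$ exactly, so Theorem~\ref{thm:heinduct} applies at each step. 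The induction closes the same way.

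There is no real obstacle; the content of the theorem is essentially bookkeeping. The only step requiring care is verifying that the exponent of the intermediate group $G_{i+1}$ at each stage equals the product appearing in the wide-condition hypothesis, which is immediate from the pairwise distinctness of the primes involved and, in part (ii), from the assumption $n_{r+1} \geq n_{r+1}^\star$ (so the second copy of $C_{p_{r+1}^{n_{r+1}^\star}}$ does not change the exponent).
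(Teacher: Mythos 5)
Your proposal is correct and matches the paper's intent exactly: the paper states this result as a citation from \cite{H}, introduced by the remark that it follows from ``combining Theorem~\ref{thm:heinduct} with Theorem~\ref{thm:prevresult},'' and your iterated peeling argument (with base cases from Theorem~\ref{thm:prevresult}(i) and (ii) respectively, and the exponent bookkeeping you describe) is precisely that combination spelled out.
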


It should be noted that for general forms of groups with more than two prime divisors, Conjecture~\ref{conj:main} has only been proved for groups in which most prime divisors have only ``one generator" (i.e., most Sylow p-subgroups of $G$ are cyclic). Our work focuses mainly on attempting to go beyond this limitation.


\section{Summary of Main Results}

In the first part of this paper, we prove an inductive result building on He's result (\cite{H}). We first introduce the following definition, building on the wideness definition proposed by He \cite{H}.

\begin{defn}
\label{defn:2small} A prime $p$ is \textit{2-small with respect to} $n$ if $p \nmid n$ and
$$\frac{2p+2}{2p+1} > \sum_{d|n} \frac{1}{d},$$
\noindent in which case we write $p \prec^2 n$.
\end{defn}

Our first main theorem bearing on Conjecture~\ref{conj:main} is the following inductive result.


\begin{thm}
\label{thm:result1}
Let $G$ be a finite abelian group and $p$ be a prime satisfying $p \prec^2 \exp(G)$. Then the equality
$$\mathsf{k}(C_{p^m} \bigoplus C_{p^n} \bigoplus G) = \mathsf{k}(C_{p^m}) + \mathsf{k}(C_{p^n}) + \mathsf{k}(G)$$
\noindent holds. In particular, if $\mathsf{k}(G) = \mathsf{k}^{\star}(G)$, then
$$\mathsf{k}(C_{p^m} \bigoplus C_{p^n} \bigoplus G) = \mathsf{k}^{\star} (C_{p^m} \bigoplus C_{p^n} \bigoplus G)$$
as well.
\end{thm}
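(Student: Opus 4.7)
The inequality $\ms{k}(C_{p^m} \oplus C_{p^n} \oplus G) \geq \ms{k}(C_{p^m}) + \ms{k}(C_{p^n}) + \ms{k}(G)$ is immediate: concatenate zero-sum free sequences of maximum cross number taken separately on each direct summand; since no nontrivial subsum can mix the summands, the result is still zero-sum free, and cross numbers add. Hence the substance of the theorem is the reverse inequality. Once this is established, the ``In particular'' clause follows by the $p$-group case of Theorem~\ref{thm:prevresult}(1), which yields $\ms{k}(C_{p^m}) + \ms{k}(C_{p^n}) = \ms{k}^{\star}(C_{p^m} \oplus C_{p^n})$.

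My plan is to adapt to rank $2$ the coset-partitioning technique He uses in the proof of Theorem~\ref{thm:heinduct}. Write $H = C_{p^m} \oplus C_{p^n}$ and let $S$ be a zero-sum free sequence over $H \oplus G$ with $\ms{k}(S) = \ms{k}(H \oplus G)$. Let $\pi : H \oplus G \to G$ be the projection and decompose $S = \prod_{g \in G} S_g$, where $S_g$ gathers those terms of $S$ whose $G$-coordinate equals $g$. The critical observation is that every nontrivial subsequence $R \mid S$ with $\sigma(\pi(R)) = 0$ has its actual sum $\sigma(R)$ in $H \times \{0\}$, and, because $S$ is zero-sum free, this $H$-value must be nonzero. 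Hence each ``$\pi$-vanishing lift'' inside $S$ produces a nonzero element of $H$. My plan is then to extract a well-chosen family of such lifts, record their $H$-sums as an auxiliary sequence $R^{\ast}$ over $H$, show that $R^{\ast}$ is zero-sum free in $H$ (so that $\ms{k}(R^{\ast}) \leq \ms{k}(H) = \ms{k}^{\star}(H)$ via the $p$-group case of Theorem~\ref{thm:prevresult}(1)), and bound the cross number of $S$ not absorbed into these lifts by $\ms{k}(G)$ using the extremality of $S$ and induction on $|G|$.

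The main obstacle lies in choosing the lifts so that the bookkeeping closes. In the rank-$1$ setting, a single partial-sum lift per coset of $\pi(S)$ suffices, and He's wideness inequality $\frac{p}{p-1} \geq \sum_{d \mid \exp(G)} \frac{1}{d}$ is exactly the arithmetic needed to balance the $H$-gain against the $G$-loss. In rank $2$, however, $H$ admits independent zero-sum free sequences in two linearly independent directions, and so one should extract up to \emph{two} lifts per $G$-coset, effectively doubling the $H$-contribution per unit of $G$-contribution. The $2$-small condition $\frac{2p+2}{2p+1} > \sum_{d \mid \exp(G)} \frac{1}{d}$ appears to be calibrated precisely for this: the $2p+1$ in the denominator reflects the doubled lift cost per coset, and the strict inequality supplies the combinatorial slack to handle the $p+1$ lines through the origin in $C_p \oplus C_p$. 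The delicate step, and the one where the full strength of $p \prec^{2} \exp(G)$ should enter, is showing that the two lifts drawn from each coset can be taken to be $H$-linearly independent, so that the resulting $R^{\ast}$ is zero-sum free in $H$ itself and not merely in some proper quotient of $H$; without this, the bound $\ms{k}(R^{\ast}) \leq \ms{k}^{\star}(H)$ fails to capture both direct summands and the argument collapses.
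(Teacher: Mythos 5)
Your lower bound and the reduction to the upper bound are fine, but the proposal stops exactly where the proof has to start: you state yourself that the ``delicate step'' of extracting two $H$-linearly independent lifts per $G$-coset and showing the auxiliary sequence $R^{\ast}$ is zero-sum free in all of $H$ is unresolved, and, as you note, without it the argument collapses. That is a genuine gap, not a detail to be filled in later. Moreover, nothing in the proposal explains how the part of $S$ not absorbed into lifts gets bounded by $\ms{k}(G)$; ``extremality of $S$ and induction on $|G|$'' is not an argument, and the projection of that remainder to $G$ need not be zero-sum free unless one first knows that the subsums of the extracted $p$-part already exhaust $H \setminus \{0\}$. Your reading of the $2$-small condition is also off: it is not calibrated to a linear-independence requirement on lifts.

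The paper's actual route is different in kind. It works with a \emph{dense} extremal zero-sum free sequence over $C_{p^m} \oplus C_{p^n} \oplus G$ and counts terms of each order. Girard's invariants $\ms{D}_{(d',d)}$ and $\eta_{(d',d)}$ (Theorem~\ref{thm:girard}), via $\eta(C_p^2) = 3p-2$, give an upper bound of $3p-3$ on the number of terms of each relevant order (Lemma~\ref{lem:2amal}); a replacement argument (Lemma~\ref{lem:2rep}) then forces a \emph{lower} bound of $2p-2$ terms of order $p^a$ for each $a \le n$ (and $p-1$ for $n < a \le m$, by Lemma~\ref{lem:1rep}). The $2$-smallness inequality $\frac{2p+2}{2p+1} \ge \sum_{d \mid \exp(G)} \frac{1}{d}$ is precisely what closes the cross-number bookkeeping in that replacement: the $2p+1$ arises as $(2p-2) + \frac{3p-3}{p-1}$, i.e., from the target count plus the $\eta(C_p^2)$ bound, not from a ``doubled lift cost per coset.'' These counts force the $p$-primary subsequence $S'$ to satisfy $\ms{k}(S') = \ms{k}(C_{p^m} \oplus C_{p^n})$, whence $\Sigma(S')$ contains every nonzero element of the $p$-component, whence $S(S')^{-1}$ projects to a zero-sum free sequence over $G$ and $\ms{k}(S(S')^{-1}) \le \ms{k}(G)$. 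To salvage your lift-based plan you would need, at minimum, a proof that the lifts can be chosen with $\Sigma(R^{\ast}) \supseteq H \setminus \{0\}$ and a concrete mechanism replacing the appeal to induction on $|G|$; as written the proposal does not prove the theorem.
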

\noindent In Section~\ref{sec:part1} I give a proof of this theorem.


Our second main theorem pertaining to Conjecture~\ref{conj:main} is the following (non-inductive) result.

\begin{thm}
\label{thm:result2}
Let $G$ be a finite abelian group of one of the following forms.
\be
\ii $G = H_p \bigoplus C_{q^k}$ with distinct primes $p,q$, where $H_p$ is any finite $p$-group.
\ii $G = C_{p} \bigoplus C_{q} \bigoplus {C_r}$ with distinct primes $p,q,r$.
\ee
Then we have
\be
\ii Each minimal zero-sum sequence $U$ over $G$ with $\ms{k}(U) = \ms{K}(G)$ has the form $U = g \prod U_i$ where $U_i$ is a zero-sum free sequence over the $p_i$-primary component of $G$ for each $i \in [1,s]$, where $\{p_1, p_2, \cdots, p_s\}$ is the set of prime divisors of $\exp(G)$.
\ii In particular, Conjecture~\ref{conj:main} holds: $\ms{K}(G) = \ms{K}^\star (G) = \ms{k}^\star (G) + \frac{1}{\exp(G)}$ and each zero-sum free sequence $S$ with $\ms{k}(S) = \ms{k}^\star (G)$ has the form $S = \prod S_i$ where $S_i$ is a zero-sum free sequence over the $p_i$-primary component of $G$ for each $i \in [1,s]$.
\ee
\end{thm}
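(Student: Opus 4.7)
The plan is to prove the structural and numerical assertions in tandem, leveraging the known cases $\mathsf{K}(H_p)=\mathsf{K}^\star(H_p)$ for $p$-groups (Theorem~\ref{thm:prevresult}(1)) and $\mathsf{K}(C_{p^m})=\mathsf{K}^\star(C_{p^m})$ for cyclic prime-power groups. Let $U$ be a minimal zero-sum sequence over $G$ achieving $\mathsf{k}(U)=\mathsf{K}(G)$. Removing any element $g\in U$ yields a zero-sum free sequence $S:=U/g$ with $\mathsf{k}(U)=\mathsf{k}(S)+\frac{1}{\ord(g)}$, so $\mathsf{K}(G)\leq \mathsf{k}(G)+\frac{1}{\ord(g)}$. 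I would first verify $\mathsf{k}(G)=\mathsf{k}^\star(G)$ for both families (for case (i) via the $p$-group result combined with standard direct-sum inequalities; for case (ii) this amounts to elementary cyclic-group computations). To conclude $\mathsf{K}(G)\leq \mathsf{K}^\star(G)$ I then need some $g\in U$ with $\ord(g)=\exp(G)$; both this and the structural description will emerge from an analysis of zero-sum free maximizers.

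Write $G=\bigoplus_i G_{p_i}$ for the primary decomposition. Each $g\in G$ decomposes uniquely as $g=\sum_i g_{(i)}$ with $g_{(i)}\in G_{p_i}$, and $\ord(g)=\prod_i\ord(g_{(i)})$. Call $g$ \emph{mixed} if at least two $g_{(i)}$ are nonzero. Given a zero-sum free $S$ containing a mixed element $s$, form the \emph{splitting}
$$
S':=(S/s)\cdot\prod_{i\,:\,s_{(i)}\neq 0}s_{(i)}.
$$
Since $\frac{1}{\prod_i\ord(s_{(i)})}<\sum_i\frac{1}{\ord(s_{(i)})}$, we get $\mathsf{k}(S')>\mathsf{k}(S)$. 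If $S'$ were zero-sum free, this contradicts $\mathsf{k}(S)=\mathsf{k}(G)$, so $S'$ must contain a zero-sum subsequence $V=V_0\cdot\prod_{i\in I}s_{(i)}$ with $V_0\subseteq S/s$ and $I$ a nonempty subset of the support of $s$. If $I$ equals the full support, then $V_0\cdot s\subseteq S$ is a nontrivial zero-sum, contradicting $S$ being zero-sum free; hence $I$ is a \emph{proper} subset, and we obtain $V_0\subseteq S/s$ with $\sigma(V_0)=-\sum_{i\in I}s_{(i)}$, a nonzero element confined to $\bigoplus_{i\in I}G_{p_i}$.

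This obstruction is the engine of the proof: the subsequence $V_0$ forces a partial cross-primary decomposition of $S$, and iterating should drive $S$ toward a decomposition $S=\prod S_i$ with each $S_i$ zero-sum free over $G_{p_i}$, else a further replacement produces a strictly larger cross-number zero-sum free sequence, contradicting the choice of $S$. In case (i) only two primary components are present, so $I$ is forced to be a singleton and one can likely induct on an invariant such as $\prod_{\text{mixed }s\in S}\ord(s)$, invoking Theorem~\ref{thm:prevresult}(1) on the $p$-group $G_p=H_p$ once the sequence is decomposed. In case (ii), every $G_{p_i}$ is cyclic of prime order, so sum constraints are explicit and the same style of induction should carry through. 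Once any zero-sum free maximizer $S$ is known to decompose primarily, applying this to $S=U/g$ with $g\in U$ of maximal order forces $\ord(g)=\exp(G)$ (the only way to have $\mathsf{k}(U)=\mathsf{K}^\star(G)=\mathsf{k}^\star(G)+\frac{1}{\exp(G)}$ simultaneously with $\mathsf{k}(S)=\mathsf{k}^\star(G)$), and the decomposition $S=\prod S_i$ immediately yields $U=g\prod U_i$ with each $U_i$ zero-sum free over $G_{p_i}$ and $\sigma(U_i)=-g_{(i)}$.

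\textit{Main obstacle.} The hardest part will be making the iteration work: turning the local obstruction ``$V_0\subseteq S/s$ with sum in a proper primary sub-summand'' into a global decomposition of $S$. A naive iteration may fail to terminate or lose ground in the cross-number estimate, especially in case (ii) where three primes interact. The key will be choosing the right monotone invariant (the product of orders of the mixed elements is a natural candidate) and arranging the replacement so this invariant strictly drops at each step while the extremal cross number is maintained; verifying termination, and ruling out exotic configurations in which iteration stalls, is where the most delicate combinatorial work will be needed.
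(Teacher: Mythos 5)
Your splitting observation is correct as far as it goes, but the proposal has two gaps, each of which is essentially the whole theorem. First, you take $\ms{k}(G)=\ms{k}^\star(G)$ as an input obtainable from ``the $p$-group result combined with standard direct-sum inequalities.'' The standard direct-sum inequality only gives $\ms{k}(H_p\bigoplus C_{q^m})\geq\ms{k}(H_p)+\ms{k}(C_{q^m})$; the reverse inequality is precisely the new content of case (i) (Theorem~\ref{thm:prevresult} covers the $p$-rank $\leq 2$, $q$-elementary situations only), and establishing it is what the bulk of the paper's argument does. That argument is quantitative: it projects the mixed-order subsequences $T_1,\dots,T_k$ onto the $C_{q^m}$-coordinate level by level, extracts $\lfloor\cdot\rfloor$-many disjoint zero-sum blocks at each level, amalgamates their preimages into elements of $H_p$ of controlled order, and converts the resulting counts into a cross-number inequality via Lemma~\ref{lem:floorsum1}; the structural statements then fall out of the equality conditions. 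Your qualitative obstruction --- for each mixed $s$ there is some $V_0\mid S/s$ with $\sigma(V_0)=-\sum_{i\in I}s_{(i)}$ for a proper nonempty $I$ --- is much weaker: it produces one subsequence with sum in a proper primary summand, but supplies no mechanism that increases the cross number or decreases your proposed invariant $\prod_{\text{mixed }s}\ord(s)$ (for instance, amalgamating $V_0\cdot s$ into $\sum_{i\notin I}s_{(i)}$ need not raise the cross number, since you forfeit all of $\ms{k}(V_0)$). You correctly flag this as the main obstacle, but no engine is exhibited, and I do not see how to build one without reintroducing exactly the counting that the projection argument performs.

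Second, the passage from zero-sum free maximizers to minimal zero-sum maximizers is also a gap. Writing $U=gS$ with $\ord(g)$ maximal only gives $\ms{k}(U)\leq\ms{k}(G)+1/\ord(g)$, which exceeds $\ms{K}^\star(G)$ whenever $\ord(g)<\exp(G)$, and your structure theorem for \emph{exact} zero-sum free maximizers says nothing about $S=U/g$ when $\ms{k}(S)<\ms{k}^\star(G)$. You would need either an a priori argument that $U$ contains an element of order $\exp(G)$, or control over near-maximizers; the paper instead reruns the projection argument on $U$ itself, splits into cases according to whether the final remainder $R_k$ is empty, uses the sharpened Lemma~\ref{lem:floorsum2} in the extremal case, and finishes with an explicit computation showing the mixed part of $U$ is a single element of order $\exp(G)$. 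As written, the proposal is a plausible program rather than a proof.
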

\noindent The proof of this theorem uses a different apporach, discussed in Section~\ref{sec:part2}.

Both results extend previous results. Past results either had the restriction that (1) $\exp(G)$ has a small number of prime factors or (2) $G$ has only one generator corresponding to each prime except for at most one prime, with certain restrictions on the size of that prime number. Theorem~\ref{thm:result1} extends the proof to cases where $G$ has two generators corresponding to each prime.

Theorem~\ref{thm:result2} applies to all groups of this form with no conditions on the sizes of the primes, and it also solves the associated inverse problem for the case $G = C_p \bigoplus C_q \bigoplus C_r$, where $p,q,r$ are arbitrary distinct primes.


\section{Proof of Theorem 7}
\label{sec:part1}

In this section, we provide an extension of He's work \cite{H}, by showing a result of the form $\mathsf{k}(C_{p^m} \bigoplus C_{p^n} \bigoplus G) = \mathsf{k}(C_{p^m}) + \mathsf{k}(C_{p^n}) + \mathsf{k}(G)$.

We decompose a finite abelian group $G$ in the canonical form
\begin{equation}
\label{eqn:canon}
G = \bigoplus_{i=1}^{r} \bigoplus_{j=1}^{k_i} C_{p_i^{a_{i,j}}},
\end{equation}
where $p_1 < p_2 < \cdots < p_r$ are primes and for each $i \in [1,r]$, we assume $a_{i,1} \geq a_{i,2} \geq \cdots \geq a_{i,k_i}$. If $n > k_i$ then $a_{i,n}$ is taken to be zero.\\
 
The following definition from \cite{KZ} is helpful.
\begin{defn}
\label{defn:dense}
A zero-sum free sequence $S$ over $G$ is \textit{dense} if $\mathsf{k}(S) = \mathsf{k}(G)$ and $|S| = \min \lbrace |T|: \mathsf{k}(T) = \mathsf{k}(G), T \text{ is zero-sum free} \rbrace.$
\end{defn}
Following \cite{BCMP} and \cite{H}, we also make the following definition.

\begin{defn}
\label{defn:amal}
By \textit{amalgamating} a subsequence $T$ of a sequence $S$ we mean replacing $T$ with its sum.
\end{defn}

Amalgamating any subsequence of a zero-sum free sequence keeps it zero-sum free. Thus, for a dense zero-sum free sequence, amalgamation decreases $\mathsf{k}(S)$. Noting this, X. He  demonstrated the following restriction on dense zero-sum free sequences, which becomes a key lemma in his theorem.

\begin{lem}[Amalgamation Lemma; see \cite{H}]
\label{lem:amal}  Let $G$ be a group of the form (\ref{eqn:canon}), and suppose that $a_{i,1} > a_{i,2}$ for some $i \in [1,r]$. Let $l$ be a positive integer divisible by $p_i^{a_{i,2} + 1}$. If $S$ is a dense zero-sum free sequence over $G$, then $S$ contains at most $p_i - 1$ elements of order $l$.
\end{lem}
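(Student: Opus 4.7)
The plan is to argue by contradiction using the amalgamation technique of He. Suppose, toward contradiction, that a dense zero-sum free sequence $S$ contains $p_i$ elements of order $l$ (counted with multiplicity); denote them $g_1,\dots,g_{p_i}$. I will exhibit a sub-selection $I\subseteq \{1,\dots,p_i\}$ with $|I|\geq 2$ such that amalgamating the subsequence $\prod_{j\in I} g_j$ into the single element $g_I:=\sum_{j\in I}g_j$ produces a shorter zero-sum free sequence $S'$ with $\ms{k}(S')=\ms{k}(S)$, contradicting the minimality of $|S|$ in the definition of density.

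The crucial structural observation concerns the $p_i$-Sylow. Let $p_i^b$ denote the $p_i$-part of $l$; by hypothesis $b\geq a_{i,2}+1$. In the decomposition $\bigoplus_{s=1}^{k_i} C_{p_i^{a_{i,s}}}$ of the $p_i$-Sylow, the $p_i$-component of each $g_j$ has order $p_i^b$, and since every coordinate indexed $s\geq 2$ lives in a cyclic group of order at most $p_i^{a_{i,2}}<p_i^b$, the maximal order $p_i^b$ must be realized by the first coordinate. Writing that first coordinate as $u_j\cdot p_i^{a_{i,1}-b}$ with $u_j\in(\Z/p_i^b)^{\times}$, the reductions $\bar u_j\in(\Z/p_i)^{\times}$ are all nonzero. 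Consider the $p_i+1$ partial sums $s_k=\bar u_1+\cdots+\bar u_k$ in $\Z/p_i$ for $0\leq k\leq p_i$; by pigeonhole two coincide, yielding a set $I=\{a+1,\dots,c\}$ with $0\leq a<c\leq p_i$ such that $\sum_{j\in I}\bar u_j\equiv 0\pmod{p_i}$, and $|I|\geq 2$ because no single $\bar u_j$ vanishes modulo $p_i$.

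A Sylow-by-Sylow computation then shows $\ord(g_I)\leq l/p_i$: on the first coordinate of the $p_i$-Sylow, $\sum_{j\in I}u_j\cdot p_i^{a_{i,1}-b}$ is divisible by $p_i^{a_{i,1}-b+1}$ by the choice of $I$, so has order at most $p_i^{b-1}$; the remaining coordinates of the $p_i$-Sylow already sit in groups of order at most $p_i^{a_{i,2}}\leq p_i^{b-1}$; and for each prime $q\neq p_i$ the $q$-component of $g_I$ has order dividing the $q$-part of $l$. Consequently the amalgamation replaces $|I|$ summands contributing $|I|/l$ to $\ms{k}(S)$ by a single summand contributing $1/\ord(g_I)\geq p_i/l\geq |I|/l$, so $\ms{k}(S')\geq \ms{k}(S)$. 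Combined with $\ms{k}(S')\leq \ms{k}(G)=\ms{k}(S)$ (from $S'$ still being zero-sum free), this forces $\ms{k}(S')=\ms{k}(S)$ while $|S'|=|S|-|I|+1<|S|$, contradicting density. The main obstacle I anticipate is the bookkeeping across the various Sylow components, together with the careful isolation of the first coordinate of the $p_i$-Sylow as the unique carrier of the maximal $p_i$-order; the combinatorial input is the classical Davenport-constant equality $D(\Z/p_i)=p_i$, deployed via partial sums.
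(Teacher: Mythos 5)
Your proof is correct. Note that the paper does not actually prove this lemma itself (it is quoted from He's paper), but it does prove the generalization it needs, the $n$-Amalgamation Lemma (Lemma~\ref{lem:namal}), by citing Girard's Proposition~3.1: if $S$ contained $\eta_{(p_i,l)}(G)$ elements of order $l$, one could extract a subsequence of length at most $p_i$ whose sum has order dividing $l/p_i$, and amalgamating it contradicts density. Your argument exploits exactly the same amalgamation-versus-density tension, but replaces the appeal to the $\eta_{(d',d)}$ machinery in the case $n=1$ (where the relevant constant is just $\ms{D}(C_{p_i})=p_i$) by a direct elementary derivation: you isolate the top coordinate of the $p_i$-Sylow as the unique carrier of the order $p_i^b$ (this is precisely where the hypotheses $a_{i,1}>a_{i,2}$ and $p_i^{a_{i,2}+1}\mid l$ enter), and the partial-sum pigeonhole in $\Z/p_i$ produces a block $I$ with $|I|\geq 2$ and $\ord(g_I)\mid l/p_i$; the bookkeeping $1/\ord(g_I)\geq p_i/l\geq |I|/l$ together with the strict drop in length then contradicts density, just as in the paper. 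What your route buys is a self-contained proof of this special case from first principles; what the paper's route buys is the uniform generalization to $a_{i,n}>a_{i,n+1}$ with bound $\eta(C_{p_i}^n)-1$, which is what Section~\ref{sec:part1} actually requires. Two small points worth making explicit: $g_I\neq 0$ is automatic because $S$ is zero-sum free, so the amalgamated sequence is genuinely a sequence over $G$ and remains zero-sum free; and $b\leq a_{i,1}$ holds automatically since $p_i^b$ divides the exponent of the $p_i$-Sylow, so the representation $u_j\cdot p_i^{a_{i,1}-b}$ of the first coordinate makes sense.
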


This lemma, along with another lemma from the same paper, will be used in our proof:


\begin{lem}[Lemma 15 of \cite{H}]
\label{lem:1rep}
Let $G$ be of the form (\ref{eqn:canon}) with $a_{1,1} > a_{1,2}$ and let $a \in [a_{1,2} + 1, a_{1,1}]$.
If $S$ is a dense zero-sum free sequence over $G$ and
$$ p_1 \prec p_2^{a_{2,1}} p_3^{a_{3,1}} \cdots p_r^{a_{r,1}},$$
\noindent then $S$ contains at least $p_1 - 1$ elements of order $p_1^a$.
\end{lem}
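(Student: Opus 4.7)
The plan is a proof by contradiction: suppose the dense zero-sum free sequence $S$ contains only $k < p_1 - 1$ elements of order $p_1^a$ for some $a \in [a_{1,2}+1, a_{1,1}]$, and exhibit a zero-sum free sequence $S'$ of cross number at least $\ms{k}(S)$ but strictly shorter length, contradicting the minimum-length clause of density.

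I would first partition $S = S_0 \cdot T$, where $T$ collects elements of $S$ whose order is a multiple of $p_1^{a_{1,2}+1}$. The possible orders of $T$-elements are $p_1^b d$ with $b \in [a_{1,2}+1, a_{1,1}]$ and $d \mid n := p_2^{a_{2,1}} \cdots p_r^{a_{r,1}}$, and the Amalgamation Lemma (Lemma~\ref{lem:amal}) bounds the multiplicity $m_{b,d}$ of each such order in $T$ by $p_1 - 1$, with $m_{a,1} \le k$. Rewriting the wideness hypothesis $p_1 \prec n$ as $(p_1-1)\sum_{d\mid n}\tfrac{1}{d} \le p_1$ yields the cross-number estimate
\begin{equation*}
\ms{k}(T) \;\le\; \sum_{b=a_{1,2}+1}^{a_{1,1}}\frac{1}{p_1^{b-1}} \;-\; \frac{p_1-1-k}{p_1^a},
\end{equation*}
in which the last term is a strictly positive shortfall reflecting the assumed deficit at order $p_1^a$.

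Next I would construct a replacement subsequence $T'$ supported in the first cyclic $p_1$-factor $C_{p_1^{a_{1,1}}}$, of the form $T' = \prod_{b=1}^{a_{1,1}-a_{1,2}}(p_1^{b-1}e)^{p_1-1}$ for a suitably chosen generator $e$. This sequence is zero-sum free in $\langle e \rangle$, has length $(p_1-1)(a_{1,1}-a_{1,2})$, and contains exactly $p_1 - 1$ elements of each order $p_1^b$ for $b \in [a_{1,2}+1, a_{1,1}]$. The idea is that, once the slack in the bound above is accounted for, the combined sequence $S_0 \cdot T'$ will satisfy $\ms{k}(S_0 \cdot T') \ge \ms{k}(S)$ while $|S_0 \cdot T'| < |S|$, contradicting density.

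The principal obstacle lies in two entangled verifications. First, choosing the generator $e$ so that $S_0 \cdot T'$ remains zero-sum free: the hypothesis $a > a_{1,2}$ is crucial, since it forces the $p_1$-coordinates of $T'$-elements to lie outside the $p_1$-support of $S_0$, so that a basis-change argument exploiting this separation closes the verification. Second, matching cross numbers precisely: one may need to retain part of the coprime-to-$p_1$ structure of $T$ inside $T'$ in order to recapture enough mass, and pushing the wideness inequality to its full strength is required to close the length-versus-cross-number gap. This second step is the most technical, and is where the delicate interaction between amalgamation and wideness is most strongly felt.
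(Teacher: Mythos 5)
Your overall template --- remove the high-$p_1$-order part of $S$, substitute a pure sequence of $p_1$-power-order elements supported on one cyclic factor, get zero-sum freeness from order separation, and compare cross numbers via the Amalgamation Lemma plus wideness --- is exactly the scheme the paper uses for the analogous Lemma~\ref{lem:2rep} (the paper only cites \cite{H} for Lemma~\ref{lem:1rep} itself). Two preliminary remarks: the zero-sum freeness of $S_0\cdot T'$ needs no basis change or delicate choice of $e$, since any element of order $p_1^{a_{1,1}}$ works (no nonempty subsum of $S_0$ has order divisible by $p_1^{a_{1,2}+1}$, while every nonempty subsum of $T'$ does, exactly as in the proof of Lemma~\ref{lem:2rep}); and the contradiction should target maximality of the cross number rather than minimality of length, because $|T|$ has no a priori lower bound, so $|S_0\cdot T'|<|S|$ need not hold.

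The genuine gap is the cross-number comparison, which you flag as ``technical'' but which in fact fails as set up. Write $\Sigma=\sum_{b=a_{1,2}+1}^{a_{1,1}}p_1^{-b}$ and $X=\sum_{d\mid n}d^{-1}$. Your bound reads $\ms{k}(T)\le (p_1-1)X\,\Sigma-(p_1-1-k)p_1^{-a}$, while $\ms{k}(T')=(p_1-1)\Sigma$, so concluding $\ms{k}(T')\ge\ms{k}(T)$ from these bounds requires
\begin{equation*}
(p_1-1-k)\,p_1^{-a}\;\ge\;(p_1-1)(X-1)\,\Sigma .
\end{equation*}
In the worst case $k=p_1-2$ the left side is $p_1^{-a}\le p_1^{-(a_{1,2}+1)}\le\Sigma$, whereas wideness only gives $(p_1-1)(X-1)\le 1$, so the right side can be as large as $\Sigma$; the inequality is at best an equality and fails whenever wideness is near-tight and either $a>a_{1,2}+1$ or $a_{1,1}>a_{1,2}+1$ (e.g.\ $p_1=3$, $n=5\cdot7\cdot11$, where $X\approx1.496$, $a_{1,1}=2$, $a_{1,2}=0$, $a=1$, $k=1$ gives upper bound $\approx0.997$ for $\ms{k}(T)$ against $\ms{k}(T')=8/9$). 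The structural reason is that you replace \emph{all} levels $b\in[a_{1,2}+1,a_{1,1}]$ at once, so the wideness slack must pay the factor $\tfrac{p_1}{p_1-1}$ at every level simultaneously, while the deficit $\ge p_1^{-a}$ lives at a single level. The paper's template repairs this with two ingredients you omit: (i) the replacement is localized to the elements of order divisible by $p_1^{a}$, with $a$ chosen as the \emph{largest} deficient level, and (ii) the non-$p_1$-part of the removed subsequence is not discarded outright --- for each sufficiently populated (``full'') order $d$ one retains the amalgamated sum $\sigma(T_d)$, of order dividing $d/p_1$, and these terms contribute $p_1/d$ each to the new cross number. Both are needed to close the inequality under the stated arithmetic hypothesis.
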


Our crucial observation is that by loosening the bound $p_i - 1$, we can obtain results for $a_{i,3}$, and $a_{i,j}$ in general. For this we consider two constants studied in recent literature.


\begin{defn}
\label{defn:psum}
Let $G$ be a finite abelian group with exponent $\exp(G) = e$.
\be
\ii The Erd\H{o}s-Ginzburg-Ziv constant $\ms{s}(G)$ is defined as the smallest integer $l \in \N$ such that every sequence over $G$ of length $|S| \geq l$ has a zero-sum subsequence $T$ of length $|T| = e$.
\ii The invariant $\eta(G)$ is defined as the smallest integer $l \in \N$ such that every sequence $S$ over $G$ of length $|S| \geq l$ has a zero-sum subsequence $T$ of length $|T| \in [1,e]$.
\ee
\end{defn}

The invariants $\ms{s}(G)$ and $\eta(G)$ have been studied since the 1960s, and while the problem is yet to be settled for arbitrary $G$ in general, for rank two groups their precise values are known. Indeed we have (see \cite{GH}, Theorem 5.8.3),


\label{lem:rank2eta}
\begin{lem}
Let $G = C_{n_1} \bigoplus C_{n_2} with 1 \leq n_1 ~ | ~ n_2$. Then
$$ \ms{s}(G) = 2n_1 + 2n_2 - 3 \text{   and   } \eta(G) = 2n_1 + n_2 - 2.$$
\end{lem}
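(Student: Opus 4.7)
The plan is to prove the two equalities by establishing lower bounds via explicit constructions and upper bounds via deep results from the literature. Fix a basis $(e_1, e_2)$ of $G$ with $\ord(e_i) = n_i$.

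For $\eta(G) \geq 2n_1 + n_2 - 2$, I would exhibit the sequence
$$ S = e_1^{n_1-1} \cdot (e_1 + e_2)^{n_1-1} \cdot e_2^{n_2-1}$$
of length $2n_1 + n_2 - 3$. Parameterizing a nonempty subsequence by multiplicities $(a, b, c)$ with $0 \leq a, b \leq n_1 - 1$ and $0 \leq c \leq n_2 - 1$, its sum equals $(a+b)e_1 + (b+c)e_2$. A short case check on which multiples of $n_1$ and $n_2$ the small quantities $a+b \leq 2n_1 - 2$ and $b+c \leq n_1 + n_2 - 2$ can realize shows that $n_1 \mid a+b$, $n_2 \mid b+c$, and $a + b + c \leq n_2$ together force $a = b = c = 0$, so $S$ has no zero-sum subsequence of length in $[1, n_2]$. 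For $\ms{s}(G) \geq 2n_1 + 2n_2 - 3$, I would prepend $n_2 - 1$ copies of the identity $0 \in G$ to $S$, obtaining a sequence of length $2n_1 + 2n_2 - 4$, and rule out any zero-sum subsequence of length exactly $n_2 = \exp(G)$ by essentially the same case analysis, now with the length constraint becoming an equality $z + a + b + c = n_2$.

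For the upper bounds, I would first reduce $\ms{s}(G) \leq 2n_1 + 2n_2 - 3$ to $\eta(G) \leq 2n_1 + n_2 - 2$ using the general inequality $\ms{s}(G) \leq \eta(G) + \exp(G) - 1$, valid for any finite abelian group: given a sequence of length $\eta(G) + \exp(G) - 1$, one iteratively extracts zero-sum subsequences of length at most $\exp(G)$ and concatenates enough of them to obtain one of length exactly $\exp(G)$. The remaining upper bound $\eta(G) \leq 2n_1 + n_2 - 2$ I would prove by induction on the number of prime factors of $n_2$. The base case $\eta(C_p \oplus C_p) \leq 3p - 2$ is the Kemnitz conjecture, resolved by Reiher via a sophisticated application of the Chevalley--Warning theorem in $\mathbb{F}_p$. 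The inductive step descends to the quotient $G/pG$ for $p$ the smallest prime dividing $n_2$ and lifts zero-sum subsequences of the quotient back to $G$ using the induction hypothesis on the $p$-fibers.

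The main obstacle is unmistakably the Kemnitz--Reiher base case, which is genuinely deep and whose proof lies well outside the scope of this preliminary section; I would defer to the original source. Everything else, namely the explicit lower-bound constructions, the reduction $\ms{s} \leq \eta + \exp - 1$, and the inductive descent through prime factors of $n_2$, consists of standard assembly following the template spelled out in \cite[Theorem 5.8.3]{GH}.
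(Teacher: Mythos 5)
The paper offers no proof of this lemma at all: it is quoted verbatim from \cite{GH}, Theorem 5.8.3, so there is no internal argument to compare yours against. Your lower-bound constructions are the standard ones and check out: $e_1^{n_1-1}(e_1+e_2)^{n_1-1}e_2^{n_2-1}$ has no zero-sum subsequence of length in $[1,n_2]$, and prepending $0^{n_2-1}$ kills all zero-sum subsequences of length exactly $n_2$.

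The genuine gap is in your reduction of the upper bound for $\ms{s}(G)$ to the upper bound for $\eta(G)$. The inequality $\ms{s}(G) \le \eta(G) + \exp(G) - 1$ is \emph{not} an easy general fact --- it is precisely Gao's conjecture, open for general finite abelian groups --- and your sketched justification does not work: after greedily extracting disjoint zero-sum subsequences $T_1,\dots,T_k$ of lengths at most $\exp(G)-1$ (if any had length exactly $\exp(G)$ you would be done) with $\sum_i |T_i| \ge \exp(G)$, there is no reason a subcollection should have total length \emph{exactly} $\exp(G)$; two blocks of length $\exp(G)-1$ each already defeat the concatenation step. The inequality that does admit the easy appending-zeros argument is the reverse one, $\ms{s}(G) \ge \eta(G) + \exp(G) - 1$, equivalently $\eta(G) \le \ms{s}(G) - \exp(G) + 1$: pad a sequence of length $\ms{s}(G)-\exp(G)+1$ with $\exp(G)-1$ zeros, extract a zero-sum subsequence of length $\exp(G)$, and discard the zeros. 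So the logical flow must be reversed relative to your plan: one first proves $\ms{s}(C_{n_1}\oplus C_{n_2}) \le 2n_1+2n_2-3$ directly, by multiplicative induction on the prime factorization with the Kemnitz--Reiher theorem $\ms{s}(C_p\oplus C_p)=4p-3$ and the Erd\H{o}s--Ginzburg--Ziv theorem as base cases, and only then deduces $\eta(G) \le \ms{s}(G)-\exp(G)+1 = 2n_1+n_2-2$ from the easy direction. With that reversal your outline matches the proof in \cite{GH}; as written, the deduction of the $\ms{s}$ bound rests on an unproved (and in general still open) statement.
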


For recent development on these invariants we refer the reader to papers by Gao et. al. (\cite{Gao1, Gao2, Gao3}) and the recent monograph by Grynkiewicz (\cite{Gryn}, Chapter 16).

We recall two more constants that extend $\ms{D}(G)$ and $\eta(G)$ from B. Girard \cite{Girard} which will be important in our proof.
\begin{defn}
\label{defn:Gir}
Given a finite abliean group $G$, denote by $G_d$ the subgroup of $G$ consisting of elements of order dividing $d$.

Denote by $\ms{D}_{(d',d)} (G)$ the smallest integer $t \in \N$ such that every sequence $S$ in $G_d$ with length $|S| \geq t$ contains a nonempty subsequence with sum in $G_{d/d'}$.

Denote by $\eta_{(d',d)}(G)$ the smallest integer $t \in \N$ such that every sequence $S$ in $G_d$ with length $|S| \geq t$ contains a nonempty subsequence $S' | S$ with length $|S'| \leq d'$ and sum in $G_{d/d'}$.
\end{defn}

The theorem I cite from Girard's paper \cite{Girard} is Proposition 3.1:
\begin{thm}[3.1 of \cite{Girard}]
\label{thm:girard}
Let $G = C_{n_1} \bigoplus \cdots \bigoplus C_{n_r}$ with $1 < n_1 | \cdots | n_r \in \N$ be a finite abelian group and $d', d$ be such that $d' | d | \exp(G)$. Then, we have the following two equalities:

\begin{align*}
\ms{D}_{(d',d)} (G) & = \ms{D} (C_{v_1(d',d)} \bigoplus \cdots \bigoplus C_{v_r (d',d)}) \\
\eta_{(d',d)} (G) & = \ms{D} (C_{v_1(d',d)} \bigoplus \cdots \bigoplus C_{v_r (d',d)})
\end{align*}

where $v_i (d',d) = \frac{A_i}{\gcd(A_i, B_i)}, A_i = \gcd(d', n_i), B_i = \frac{\lcm(d,n_i)}{\lcm(d',n_i)}$.
\end{thm}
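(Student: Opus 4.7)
My plan is to reduce both equalities to a computation about the Davenport constant of the quotient group $\bar G = G_d / G_{d/d'}$. The key translation is that a subsequence $S'$ of a sequence $S$ over $G_d$ satisfies $\sigma(S') \in G_{d/d'}$ if and only if the image of $S'$ under the natural quotient map $\pi \colon G_d \to \bar G$ is a nonempty zero-sum subsequence in $\bar G$. This immediately identifies $\ms{D}_{(d',d)}(G)$ with $\ms{D}(\bar G)$, so the main task is to pin down the isomorphism type of $\bar G$.

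To describe $\bar G$ explicitly, I would first note that the elements of $C_{n_i}$ of order dividing $d$ form a unique subgroup of order $\gcd(d, n_i)$, giving $G_d \cong \bigoplus_{i=1}^r C_{\gcd(d, n_i)}$, and analogously $G_{d/d'} \cong \bigoplus_{i=1}^r C_{\gcd(d/d', n_i)}$. Since $G_{d/d'}$ sits inside $G_d$ componentwise, the quotient splits as $\bar G \cong \bigoplus_{i=1}^r C_{\gcd(d,n_i)/\gcd(d/d',n_i)}$. To match this with the formula in the statement, I would verify the identity $v_i(d', d) = \gcd(d, n_i)/\gcd(d/d', n_i)$ prime by prime: writing $\alpha = v_p(n_i)$, $\beta = v_p(d)$, $\gamma = v_p(d')$ with $\gamma \leq \beta$, a short case analysis on whether $\alpha \geq \beta$, $\beta - \gamma \leq \alpha < \beta$, or $\alpha < \beta - \gamma$ shows both expressions agree. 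This is routine but non-obvious, since $v_i(d',d)$ is presented in a less transparent form involving $\lcm$'s and a nested $\gcd$.

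With this identification in hand, the first equality $\ms{D}_{(d',d)}(G) = \ms{D}(\bigoplus_i C_{v_i(d',d)})$ follows immediately. For the second equality, the inequality $\eta_{(d',d)}(G) \geq \ms{D}_{(d',d)}(G) = \ms{D}(\bar G)$ is automatic from the definitions, since the $\eta$-condition is strictly stronger than the $\ms{D}$-condition. The reverse inequality, which is the heart of the matter, requires showing that whenever we have a sequence in $G_d$ of length at least $\ms{D}(\bar G)$, we can always extract a nonempty subsequence of length at most $d'$ whose sum lies in $G_{d/d'}$. Here one uses that $\exp(\bar G) \mid d'$, which is a direct consequence of the construction of $\bar G$, and I would then attempt to argue that any zero-sum subsequence guaranteed by the Davenport constant of $\bar G$ can be refined to one of length bounded by $d'$. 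I expect this length-control step to be the main obstacle, since the Davenport constant of a group can exceed its exponent in general; handling it will require invoking structural properties of minimal zero-sum subsequences in groups of the specific form $\bigoplus_i C_{v_i(d',d)}$, where the interaction between the prime decompositions of $d$, $d'$, and the $n_i$ becomes essential.
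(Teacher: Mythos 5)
Your treatment of the first equality is sound: passing to the quotient $\bar G = G_d/G_{d/d'}$, observing that a subsequence has sum in $G_{d/d'}$ exactly when its image in $\bar G$ is zero-sum, and checking prime by prime that $v_i(d',d) = \gcd(d,n_i)/\gcd(d/d',n_i)$ is the standard route. (The paper gives no proof of its own here; it quotes the result from Girard's article, so there is nothing to compare against beyond the statement itself.)

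The obstacle you flag in the second equality, however, is not a step you failed to close --- it is a genuine falsity of the statement as printed. Take $G = C_2 \oplus C_2$ and $d = d' = 2$, so that $G_d = G$, $G_{d/d'} = \{0\}$ and $v_1 = v_2 = 2$: the sequence consisting of the three distinct nonzero elements of $G$ has length $3 = \ms{D}(C_2 \oplus C_2)$ yet contains no nonempty subsequence of length at most $2$ with sum $0$, so $\eta_{(2,2)}(G) \geq 4 > \ms{D}(C_2 \oplus C_2)$. The theorem is a misquotation of Girard's Proposition 3.1, whose second identity reads $\eta_{(d',d)}(G) = \eta\bigl(C_{v_1(d',d)} \oplus \cdots \oplus C_{v_r(d',d)}\bigr)$; this is also the form the paper actually uses later, when the $n$-Amalgamation Lemma invokes $\eta_{(p_i,l)}(G) = \eta(C_{p_i}^n)$ to get the bound $\eta(C_{p_i}^n)-1$ rather than $\ms{D}(C_{p_i}^n)-1$. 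With $\eta$ on the right-hand side your quotient argument does work for the upper bound: a sequence of length $\eta(\bar G)$ in $G_d$ projects to one in $\bar G$ with a zero-sum subsequence of length at most $\exp(\bar G)$, and since $\exp(\bar G) \mid d'$ (as you verified) the corresponding subsequence of $S$ has length at most $d'$ and sum in $G_{d/d'}$; the lower bound comes from lifting an extremal sequence for $\eta(\bar G)$. So your instinct that length control is the crux, and that the Davenport constant can exceed the exponent, was exactly right --- it is precisely the reason $\eta$, not $\ms{D}$, must appear on the right-hand side, and no refinement of minimal zero-sum subsequences will rescue the equality as stated.
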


The relation between these constants and the little cross number is exemplified by the following lemma (``$n$-Amalgamation Lemma").

\begin{lem}
\label{lem:namal} ($n$-Amalgamation lemma) Let $G$ be a finite abelian group expressed in the canonical form (\ref{eqn:canon}), and suppose that $a_{i,n} > a_{i,n+1}$ for some $i \in [1,r]$. Let $l$ be a positive integer divisible by $p_i^{a_{i,n+1} + 1}$. If $S$ is a dense zero-sum free sequence over $G$, then $S$ contains at most $\eta(C_{p_i}^n) - 1$ elements of order $l$.
\end{lem}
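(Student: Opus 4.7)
The plan is to argue by contradiction, mimicking He's Amalgamation Lemma but using the invariant $\eta(C_{p_i}^n)$ in place of $p_i=\eta(C_{p_i})$. Suppose $S$ contains $m := \eta(C_{p_i}^n)$ elements $g_1,\dots,g_m$ of order $l$ (counted with multiplicity); I will find a subsequence of these whose amalgamation does not decrease the cross number but strictly decreases the length, contradicting density.

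Set $e := a_{i,n+1}+1$, so $p_i^e \mid l$ by hypothesis, and write $l = p_i^e m_0$ with $\gcd(m_0,p_i)=1$. The $p_i^e$-torsion subgroup $H$ of the $p_i$-primary part of $G$ is isomorphic to $C_{p_i^e}^n \oplus \bigoplus_{j>n} C_{p_i^{a_{i,j}}}$, since $a_{i,j}\geq a_{i,n}\geq e$ for $j\leq n$ while $a_{i,j}\leq e-1$ for $j>n$; similarly its $p_i^{e-1}$-torsion subgroup $H'$ is $C_{p_i^{e-1}}^n \oplus \bigoplus_{j>n} C_{p_i^{a_{i,j}}}$, so $H/H' \cong C_{p_i}^n$. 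Define a homomorphism $\phi: G_l \to H/H' \cong C_{p_i}^n$ sending $g$ to the class of its $p_i$-component. Any $g$ of order exactly $l$ has $p_i$-part of order exactly $p_i^e$, which is not in $H'$, so $\phi(g)\neq 0$.

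By the definition of $\eta(C_{p_i}^n)$ applied to the sequence $(\phi(g_k))_{k=1}^m$ in $C_{p_i}^n$, there is a nonempty $I\subseteq[1,m]$ with $|I|\leq\exp(C_{p_i}^n)=p_i$ and $\sum_{k\in I}\phi(g_k)=0$. Since each $\phi(g_k)\neq 0$, we have $|I|\geq 2$. Let $T := \prod_{k\in I}g_k$. Then $\phi(\sigma(T))=0$ forces the $p_i$-part of $\sigma(T)$ into $H'$, hence of $p_i$-order at most $p_i^{e-1}$; since its prime-to-$p_i$ part has order dividing $m_0$, we obtain $\ord(\sigma(T))\mid l/p_i$. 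Because $\sigma(T)\neq 0$ by zero-sum freeness, amalgamating $T$ produces a zero-sum free sequence $S'$ with $|S'|=|S|-|I|+1<|S|$ and
\[\ms{k}(S')-\ms{k}(S) \;=\; \frac{1}{\ord(\sigma(T))} - \frac{|I|}{l} \;\geq\; \frac{p_i}{l} - \frac{p_i}{l} \;=\; 0,\]
so $\ms{k}(S')=\ms{k}(S)=\ms{k}(G)$ while $|S'|<|S|$, contradicting the density of $S$.

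The main obstacle is engineering $\phi$ so that its codomain is \emph{exactly} $C_{p_i}^n$ and every element of $S$ of order $l$ has nonzero image; the gap hypothesis $a_{i,n}>a_{i,n+1}$ is used precisely here, since it guarantees that for $j\leq n$ the $j$-th $p_i$-summand contributes a $C_{p_i}$ factor to $H/H'$ while for $j>n$ it contributes nothing, pinning the rank of $H/H'$ at exactly $n$. Everything else is bookkeeping; the non-strict inequality above is harmless because $|I|\geq 2$ supplies the needed strict length reduction, and the $n=1$ case recovers Lemma~\ref{lem:amal} since $\eta(C_{p_i})=p_i$.
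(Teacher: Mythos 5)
Your argument is correct in its main thrust but takes a genuinely different route from the paper. The paper disposes of this lemma in two lines by citing Girard's result (Theorem~\ref{thm:girard}) to identify $\eta_{(p_i,l)}(G)$ with $\eta(C_{p_i}^n)$ and then invoking the density contradiction. You instead re-derive the needed special case of that identity from scratch: your torsion-quotient homomorphism $\phi\colon G_l \to H/H' \cong C_{p_i}^n$ together with the bare definition of $\eta(C_{p_i}^n)$ is exactly the mechanism hidden inside Girard's computation of the invariants $v_j(p_i,l)$. What your version buys is self-containedness and transparency about where the gap hypothesis $a_{i,n}>a_{i,n+1}$ enters (it pins the rank of $H/H'$ at $n$); what the paper's version buys is brevity, at the cost of asking the reader to verify that the $v_j$'s evaluate to $p_i$ for $j\le n$ and to $1$ otherwise. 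The closing step (amalgamation does not decrease $\ms{k}$ but strictly decreases length, contradicting density) is identical in both.

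One point needs repair before the proof is complete. You set $e=a_{i,n+1}+1$ and write $l=p_i^e m_0$ with $\gcd(m_0,p_i)=1$, but the hypothesis only says $p_i^e \mid l$, and in the intended applications (e.g.\ the integers $d\in D_3$ in the proof of Lemma~\ref{lem:2rep}) $l$ is divisible by strictly higher powers of $p_i$. In that case the $p_i$-component of an element of order $l$ has order exceeding $p_i^e$, so it does not lie in your $H$ and $\phi$ is not defined. The fix is routine: let $p_i^{e'}$ be the exact power of $p_i$ dividing $l$ and run the same construction with the $p_i^{e'}$- and $p_i^{e'-1}$-torsion subgroups; then $H/H'\cong C_{p_i}^{n'}$ with $n'=\#\{j: a_{i,j}\ge e'\}\le n$, every element of order exactly $l$ still has nonzero image, and $\eta(C_{p_i}^{n'})\le\eta(C_{p_i}^n)$ yields the stated (indeed a slightly stronger) bound. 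With that adjustment your proof goes through.
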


\begin{proof} 
This theorem follows from Theorem~\ref{thm:girard}: note that we have $\eta_{(p_i, l)} (G) = \eta (C_{p_i}^n)$ and if $S$ contains $\eta_{(p_i, l)} (G)$ elements of order $l$, there exists a subsequence with length less than or equal to $p_i$ and sum of order dividing $\frac{l}{p_i}$, but this contradicts the density assumption of $S$.
\end{proof}

Applying Lemma~\ref{lem:rank2eta} to $G = C_p^2$ gives $\eta(C_p^2) = 3p-2$. Using this constant, the ``$2$-Amalgamation lemma" can be restated as follows.

\begin{lem}
\label{lem:2amal} ($2$-Amalgamation lemma) Let $G$ be a finite abelian group expressed in the canonical form (\ref{eqn:canon}), and suppose that $a_{i,2} > a_{i,3}$ for some $i \in [1,r]$. Let $l$ be a positive integer divisible by $p_i^{a_{i,3} + 1}$. If $S$ is a dense zero-sum free sequence over $G$, then $S$ contains at most $3p_i-3$ elements of order $l$.
\end{lem}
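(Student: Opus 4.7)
The plan is to derive this as an immediate specialization of Lemma~\ref{lem:namal} (the $n$-Amalgamation Lemma) to the case $n=2$. First, I would verify that the hypotheses align: the assumption $a_{i,2} > a_{i,3}$ is precisely the $n=2$ instance of $a_{i,n} > a_{i,n+1}$, and the divisibility condition $p_i^{a_{i,3}+1} \mid l$ specializes correctly. Lemma~\ref{lem:namal} then yields that every dense zero-sum free sequence $S$ contains at most $\eta(C_{p_i}^2) - 1$ elements of order $l$.

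Next I would evaluate $\eta(C_{p_i}^2)$ by applying Lemma~\ref{lem:rank2eta} with $n_1 = n_2 = p_i$, obtaining $\eta(C_{p_i} \oplus C_{p_i}) = 2p_i + p_i - 2 = 3p_i - 2$. Subtracting $1$ produces the claimed bound $3p_i - 3$.

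Since Lemma~\ref{lem:namal} has already been established (via Girard's Proposition~\ref{thm:girard}, which identifies $\eta_{(p_i, l)}(G)$ with $\eta(C_{p_i}^n)$ and the density of $S$), and since Lemma~\ref{lem:rank2eta} supplies $\eta(C_{p_i}^2)$ directly, the derivation presents no real obstacle. The point of the restatement is to express the bound as an explicit function of $p_i$ alone: it is the rank-two analogue of He's bound $p_i - 1$ from Lemma~\ref{lem:amal}, and it will serve as the key quantitative input for extending He's inductive framework from one to two generators per prime in the proof of Theorem~\ref{thm:result1}.
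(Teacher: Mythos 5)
Your proposal matches the paper exactly: the paper also obtains Lemma~\ref{lem:2amal} as the immediate $n=2$ specialization of Lemma~\ref{lem:namal}, computing $\eta(C_{p_i}^2) = 3p_i - 2$ from Lemma~\ref{lem:rank2eta} and subtracting $1$. Both the derivation and your framing of its role (the rank-two analogue of He's bound $p_i-1$) are correct.
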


As a consequence of Lemma~\ref{lem:2amal}, we have a bound on the number of terms in dense sequences. To eliminate them altogether, we need a stronger hypothesis, namely the 2-small condition.


\begin{lem}
\label{lem:2rep} ($2$-Replacement lemma) Let $G$ be of the form (\ref{eqn:canon}) with $a_{1,2} > a_{1,3}$ and let $a \in [a_{1,3} + 1, a_{1,2}]$.
If $S$ is a dense zero-sum free sequence over $G$ and
$$ p_1 \prec^2 p_2^{a_{2,1}} p_3^{a_{3,1}} \cdots p_r^{a_{r,1}},$$
\noindent then $S$ contains at least $2p_1 - 2$ elements of order $p_1^a$.
\end{lem}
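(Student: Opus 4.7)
The plan is to adapt He's proof of the 1-Replacement Lemma (Lemma~\ref{lem:1rep}) to the present setting, using Lemma~\ref{lem:2amal} in place of Lemma~\ref{lem:amal} and the 2-smallness hypothesis in place of wideness. Suppose, toward a contradiction, that the dense zero-sum free sequence $S$ contains at most $2p_1 - 3$ elements of order $p_1^a$. I would begin by fixing the primary decomposition $G = G_1 \oplus H$, where $G_1$ is the $p_1$-Sylow subgroup of $G$ and $H$ is the direct sum of all other Sylow subgroups, and writing each $g \in S$ uniquely as $g = g' + g''$ with $g' \in G_1$, $g'' \in H$. The aim is to construct a zero-sum free sequence $\widetilde{S}$ with $\ms{k}(\widetilde{S}) > \ms{k}(S) = \ms{k}(G)$, yielding the desired contradiction.

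The construction I propose is the following replacement. Let $T$ be the subsequence of $S$ consisting of those $g$ with $\ord(g') = p_1^a$ and $g'' \neq 0$, and form $\widetilde{S}$ by replacing each such $g$ in $S$ by the pair $g'$ and $g''$. A direct computation gives
$$ \ms{k}(\widetilde{S}) - \ms{k}(S) = \sum_{g \in T}\!\left(\frac{1}{p_1^a} + \frac{1}{\ord(g'')} - \frac{1}{p_1^a \ord(g'')}\right) > 0 $$
whenever $T \neq \emptyset$, so it will suffice to show that $\widetilde{S}$ remains zero-sum free. The edge case $T = \emptyset$---where every element of $S$ whose $p_1$-part has order $p_1^a$ is already a pure element of $G_1$ of that order---will be handled separately by an amalgamation-style step, merging a pair of such pure elements into one of strictly higher order so as to increase the cross number while preserving zero-sum freeness.

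To analyze potential zero-sums in $\widetilde{S}$, suppose $Z$ is a minimal zero-sum subsequence of $\widetilde{S}$. Projecting onto $G_1$ and onto $H$, both coordinate subsums of $Z$ must vanish independently. If $Z$ contained both $g'$ and $g''$ for every $g \in T$ whose split pieces it uses, reamalgamating would produce a nontrivial zero-sum subsequence of $S$, contradicting zero-sum freeness of $S$. Hence $Z$ must include some $g'$ without its mate $g''$ (or vice versa), so the $H$-projection of $Z$ must balance using the pre-existing non-$p_1$-part elements of $S$. At this stage I would invoke Lemma~\ref{lem:2amal} to cap, for each order $l$ divisible by $p_1^{a_{1,3}+1}$, the number of elements of $S$ of order $l$ by $3p_1 - 3$, and then use the 2-smallness inequality $\sum_{d \mid p_2^{a_{2,1}} \cdots p_r^{a_{r,1}}} 1/d < \frac{2p_1 + 2}{2p_1 + 1}$ to bound the distinct subsums achievable in $H$. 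Combining these two estimates should force $Z$ to draw upon at least $2p_1 - 2$ elements of $S$ of exact order $p_1^a$ in order to cancel the $G_1$-projection, contradicting the standing assumption.

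The principal obstacle will be this final counting step. The 2-smallness inequality is designed to play precisely the role that He's wideness inequality plays in producing the sharp bound $p_1 - 1 = \ms{D}(C_{p_1}) - 1$; here the doubled target $2p_1 - 2 = \ms{D}(C_{p_1}^2) - 1$ reflects that we are working in a rank-two $p_1$-primary setting. Calibrating the subsum count in $H$ against this Davenport-type target, and checking that the strict inequality in the definition of $\prec^2$ matches the combinatorial bound tightly enough to close the gap between $2p_1 - 3$ and $2p_1 - 2$, is where I expect most of the technical effort to lie.
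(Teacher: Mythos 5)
Your construction is genuinely different from the paper's, and it has a gap that I do not see how to close. The crux is the zero-sum freeness of $\widetilde{S}$. Splitting $g = g' + g''$ into the two separate terms $g'$ and $g''$ introduces subsums that do not exist in $S$: any subsequence of $\widetilde{S}$ using $g'$ without its mate $g''$ has a sum unobtainable from $S$, and since the $G_1$-components of the elements of $S$ are arbitrary elements of the (possibly large, high-rank) $p_1$-Sylow subgroup, nothing prevents $g'$ from being cancelled by elements of $S$ of $p_1$-power order different from $p_1^a$, or by the $G_1$-parts of other mixed-order elements. Your claim that a minimal zero-sum $Z$ in $\widetilde{S}$ would be forced to draw on at least $2p_1-2$ elements of exact order $p_1^a$ is not substantiated: the bound $\ms{D}(C_{p_1}^2)=2p_1-1$ controls sequences over $C_{p_1}^2$, not cancellation inside the full group $G_1$, and neither Lemma~\ref{lem:2amal} nor the 2-smallness inequality (which is an inequality about $\sum_{d|n} 1/d$, i.e.\ about cross numbers, not about the set of subsums achievable in $H$) gives any handle on \emph{which} elements of $G_1$ occur in $S$. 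Note also that if your splitting did preserve zero-sum freeness, it would strictly increase the cross number of a dense sequence whenever $T \neq \emptyset$, independently of the hypothesis that there are at most $2p_1-3$ elements of order $p_1^a$ --- a far stronger conclusion, which is a strong signal that the zero-sum freeness step cannot go through as stated.

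The paper's proof avoids this trap with a different replacement. Having chosen $a$ largest with the deficiency, it removes the entire subsequence $S'$ of elements whose order is divisible by $p_1^a$ and inserts (i) a canonical block $S_1$ supported on the generators $e_1,e_2$ of the top two cyclic $p_1$-components, with $2p_1-2$ elements of each order $p_1^a,\dots,p_1^{a_{1,2}}$ and $p_1-1$ of each order $p_1^{a_{1,2}+1},\dots,p_1^{a_{1,1}}$, every nonzero subsum of which has order divisible by $p_1^a$ and hence cannot be completed to a zero sum by the remaining terms; and (ii) amalgamated sums $\sigma(T_d)$, one for each ``full'' order class $d$ (those with $p_1^a \,\|\, d$ and $|S_d|\geq 2p_1-1$), obtained from Girard's theorem via $\ms{D}_{(p_1,d)}(G)=2p_1-1$. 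Zero-sum freeness of the new sequence is then automatic, since deletion and amalgamation preserve it. The 2-smallness hypothesis and Lemma~\ref{lem:2amal} enter only in the final cross-number comparison $\ms{k}(S_1)+\ms{k}(S_2)>\ms{k}(S')$, where the caps $|S_d|\leq 3p_1-3$ (resp.\ $\leq p_1-1$ for $d$ divisible by $p_1^{a_{1,2}+1}$) bound $\ms{k}(S')$ from above. To salvage your approach you would need to redesign the inserted $p_1$-elements so that their subsums are invisible to the rest of the sequence, which is exactly what the paper's choice of $S_1$ accomplishes.
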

\begin{proof}

Assume the statement does not hold, and choose the largest $a$ in $[a_{1,3} + 1, a_{1,2}]$ such that there are at most $2p_1 - 3$ elements of order $p_1^a$ in $S$.

For each positive integer $d$ satisfying $p_1^a | d$ and $d | \exp(G)$, write
$$S_d = \prod_{\ord(g) = d} g^{v_g(S)}, $$
the sequence of elements with order $d$.

By Lemma~\ref{lem:2amal}, $|S_d| \leq 3p_1 - 3$. Now call an integer $d$ \textit{full} if the following conditions hold:

\be
\ii $p_1^a || d$. (That is, $p_1^a | d$ but $p_1^{a+1} \nmid d$.)
\ii $2p_1-1 \leq |S_d| \leq 3p_1-3$.
\ee

Since $|S_d| \geq 2p_1-1$ for each full $d$, we can choose a subsequence $T_d$ of $S_d$ such that $\sigma(T_d)$ has order dividing $\frac{d}{p_1}$, again by Theorem~\ref{thm:girard}, because $D_{(p_1, d)}(G) = 2p_1-1$.

Given the original sequence $S$, denote the full integers corresponding to $S$ as $d_1 , d_2 , \ldots, d_n$, and let

\be
\ii $D_1$ denote the set of full integers,
\ii $D_2$ denote the set of integers $d$ satisfying $p_1^a || d | \exp(G)$ but is not full, excluding $p_1^a$.
\ii $D_3$ denote the set of integers $d$ satisfying $p_1^{a+1} | d | \exp(G)$, but is not divisible by $p_1^{a_{1,2} + 1}$
\ii $D_4$ denote the set of integers $d$ satisfying $p_1^{a_{1,2} + 1} | d | \exp(G)$.
\ee

We distinguish $D_3$ from $D_4$ because there are at most $3p_1 - 3$ elements of order $d \in D_3$ in $S$ but there are at most $p_1 - 1$ elements of order $d \in D_4$ in $S$. This distinction will become important later.

From the definitions of $D_i$ it is clear that
\begin{align*}
\{ d: p_1^a | d | \exp(G) \} & = \{p_1^a \} \cup D_1 \cup D_2 \cup D_3 \cup D_4 \\
\{ d: p_1^a | d | \exp(G), p_1^{a_{1,2} + 1} \nmid d\} & = \{p_1^a \} \cup D_1 \cup D_2 \cup D_3 \\
\{d: p_1^a || d | \exp(G) \} & = \{p_1^a\} \cup D_1 \cup D_2.
\end{align*}

Consider $S'$ the subsequence consisting of the elements with order divisible by $p_1^a$. Remove the subsequence $S'$ from $S$, and replace it by the sequence $S_1 S_2$, where $S_1$ is a sequence of $2p_1 - 2$ elements of each order $p_1^a, p_1^{a+1}, \dots, p_1^{a_{1,2}}$ and $p_1 - 1$ elements of each order $p_1^{a_{1,2}+1}, \dots, p_1^{a_{1,1}}$, and $S_2$ is $\sigma(T_{d_1}) \sigma(T_{d_2}) \cdots \sigma(T_{d_n})$. Choose $S_1$ as follows: if $e_1, e_2$ are respectively generators of the component $C_{p_1^{a_{1,1}}}, C_{p_1^{a_{1,2}}}$, then define

$$S_1 = \prod_{k=0}^{a_{1,1} - a} [p_1^{k}e_1]^{p_1 -1} \prod_{l=0}^{a_{1,2}-a}[p_1^{l} e_2]^{p_1 -1}.$$

\noindent The result of the replacement is the sequence $T = S(S')^{-1} S_1 S_2$.

Since no subsequence sum of $S(S')^{-1} S_2$ has order divisible by $p_1^a$, but all subsequence sums of $S_1$ have order divisible by $p_1^a$, if $T$ contains a zero-sum subsequence, the subsequence cannot contain elements from $S_1$. But then, $S(S')^{-1}S_2$ is a zero-sum free sequence because it is the result of removing and amalgamating some terms of $S$. Thus $T$ is a zero-sum free sequence.

If we can show that $\mathsf{k}(T) > \mathsf{k}(S)$, or equivalently $\mathsf{k}(S_1) + \mathsf{k}(S_2) > \mathsf{k}(S')$, we will have a contradiction because $T$ is a zero-sum free sequence with larger cross number than $S$. Bounding $\mathsf{k}(S')$ by the number of terms of each order using Lemma~\ref{lem:amal}, Lemma~\ref{lem:2amal}, and the fullness criterion, we have

\begin{align*}
\mathsf{k}(S_1) & = \sum_{t=a}^{a_{1,2}} \frac{2p_1-2}{p_1^t} + \sum_{t={a_{1,2}+1}}^{a_{1,1}} \frac{p_1-1}{p_1^t} \\
\mathsf{k}(S_2) & \geq \sum_{d \in D_1} \frac{1}{d/p_1} = \sum_{d \in D_1} \frac{p_1}{d} \\
\mathsf{k}(S') & \leq \left( \frac{2p_1-3}{p_1^a} + \sum_{d \in D_1} \frac{3p_1-3}{d} + \sum_{d \in D_2} \frac{2p_1-2}{d} + \sum_{d \in D_3} \frac{3p_1-3}{d} + \sum_{d \in D_4} \frac{p_1 - 1}{d} \right).
\end{align*}

Let $m = \frac{\exp{G}}{p_1^{a_{1,1}}}$ and $X = \sum_{d|m} \frac{1}{d}$. After reorganizing terms, we have that

\begin{align*}
& \mathsf{k}(S_1) + \mathsf{k}(S_2) - \mathsf{k}(S') \\
& \geq \ms{k}(S_1) - \frac{2p_1 - 3}{p_1^a} - \sum_{d \in D_1} \frac{2p_1 - 3}{d} - \sum_{d \in D_2} \frac{2p_1 - 2}{d} - \sum_{d \in D_3} \frac{3p_1-3}{d} - \sum_{d \in D_4} \frac{p_1 - 1}{d} \\
& \geq \frac{1}{p_1^a} + \sum_{t=a + 1}^{a_{1,2}} \frac{2p_1-2}{p_1^t} + \sum_{t={a_{1,2}+1}}^{a_{1,1}} \frac{p_1-1}{p_1^t} - \sum_{d \in D_1 \cup D_2} \frac{2p_1 - 2}{d} - \sum_{d \in D_3} \frac{3p_1 - 3}{d} - \sum_{d \in D_4} \frac{p_1 - 1}{d}  \\
& \geq \frac{1}{p_1^a} - \frac{2p_1-2}{p_1^a}(X - 1) - (3p_1-3)(\frac{1}{p_1^{a+1}} + \frac{1}{p_1^{a+2}} + \cdots + \frac{1}{p_1^{a_{1,1}}})(X - 1)
\end{align*}

To prove that the last term is not less than $0$, it suffices to show (after multiplying both sides by $p_1^a$ and replacing $1/p_1 + 1/p_1^2 + \cdots + 1/p_1^{a_{1,1}-a}$ with $\frac{1}{p_1-1}$, which is larger), we have

$$\frac{2p_1+2}{2p_1+1} \geq X,$$

\noindent which is exactly the 2-smallness assumption.

\end{proof}

Lemma~\ref{lem:2rep} makes the inequality in the definition of 2-smallness assumption necessary. 

Combining the above results, we can prove Theorem~\ref{thm:result1}.

\begin{proof}[Proof of Theorem~\ref{thm:result1}]

The proof is similar to the proof of Theorem 7 of \cite{H}.

Let $G$ be a finite abelian froup and $p$ be a prime satisfying $p <^2 \exp (G)$. Let also $S$ be a dense zero-sum free sequence over $G$. Let $H = C_{p^m} \bigoplus C_{p^n} \bigoplus G$ and assume without loss of generality that $m \geq n$. 

By Lemma~\ref{lem:2rep}, there are $2p-2$ elements of order $p, p^2, \dots, p^n$, and by Lemma~\ref{lem:1rep}, there are $p-1$ elements of order $p^{n+1}, \dots, p^m$.

Let $G' = C_{p^m} \bigoplus C_{p^n}$ be the $p$-component of $H$, and let $S'$ be the subsequence of $S$ consisting of elements of $G'$. Then since elements of $S$ of order order $p^i$ are all in $S'$, we have $$\mathsf{k}(S') \geq \sum_{i=1}^{n} \frac{2p-2}{p^i} + \sum_{j=n+1}^{m} \frac{p-1}{p^j} = \mathsf{k}(G')$$ (we know the value of $\mathsf{k}(G')$ from Theorem~\ref{thm:prevresult}), but by definition, $\mathsf{k}(G')$ is the maximal cross number among zero-sum free sequences over $G'$, and $S'$ is a zero-sum free sequence over $G'$, so equality must hold.

Since the cross number is maximal, the sumset $\Sigma(S')$ of $S'$ must contain all nonzero elements of $G'$, for otherwise we can add elements of $G'$ to $S'$ and still have a sequence with larger cross number (violating $\ms{k}(S') = \ms{k}(G')$).

But then $S(S')^{-1}$ cannot have any subsums lying in $G'$, for otherwise we could form a zero-sum subsequence of $S$ with a subsequence in $S'$.
Therefore, even after projecting $H \mapsto G$, the image of $S(S')^{-1}$ is still zero-sum free. As a result, $\mathsf{k}(S(S')^{-1}) \leq \mathsf{k}(G)$ since projection cannot decrease the cross number. Then we have
$$\mathsf{k}(S) \leq \mathsf{k}(G') + \mathsf{k}(G) = \mathsf{k}(C_{p^m} \bigoplus C_{p^n}) + \mathsf{k}(G) = \mathsf{k}(C_{p^m}) + \mathsf{k}(C_{p^n}) + \mathsf{k}(G),$$

\noindent and the proof is complete as desired by observing that $\mathsf{k}(G \bigoplus H) \geq \mathsf{k}(G) + \mathsf{k}(H)$ for any two groups $G,H$. (The merger of the two maximal sequences over $G$ and $H$ is still zero-sum free in $G \bigoplus H$)
\end{proof}


\section{Proof of Theorem~\ref{thm:result2}}
\label{sec:part2}

In this section, we provide an alternative approach by merging the terms of a sequence to simplify the problem. The main idea of this approach is that given a group $G$, we attempt to concatenate some terms of given order with some terms of another order, so that we can ``increase" the number of terms in the zero-sum free sequence with a given prime (power) order and construct an inequality. In this section, we prove Theorem~\ref{thm:result2}.

To prove Theorem~\ref{thm:result2}, we proceed as follows: we first prove the \textit{little cross number} conjecture of Conjecture~\ref{conj:main}, that is, $\ms{k}(G) = \ms{k}^\star(G)$ for the groups listed. Looking at the equality conditions, we verify the inverse problem, that is, the structure of zero-sum free sequences $S$ for which $\ms{k}(S) = \ms{k}(G)$. Then we adjust the proof to show that Conjecture~\ref{conj:main} holds for minimal zero-sum sequences $U$ too.

\begin{proof}[Proof of Theorem~\ref{thm:result2}, Part 1]

We show Theorem~\ref{thm:result2} for $G = H_p \bigoplus C_{q^m}$ where $p,q$ are distinct primes and $H_p$ is a $p$-group. We proceed as follows.

\textit{Step 1.} We first wish to show that $\mathsf{K}(G) = \mathsf{K}^{\star}(G)$ for $G = H_p \bigoplus C_{q^m}$ where $p,q$ are distinct primes and $H_p$ is a $p$-group. Denote $\exp(H_p) = p^k$.

Fix a zero-sum free sequence $S$ in $G$. It suffices to show $\mathsf{k}(S) \leq \mathsf{k}^{\star}(G).$ Denote by $a(n)$ the number of elements of order $n$ in the sequence $S$. Write $S = S_p T_0 T_1 T_2 \cdots T_k$, where $S_p$ is the subsequence consisting of all elements of order $p^i$ for some $i$, and $T_i$ is the subsequence consisting of all elements of order $p^i q^j$ for some $j \geq 1$.

Projecting the subsequence $T_0 T_1$ onto the $C_{q^m}$-coordinate (denote this projection $\tau: G \mapsto C_{q^m}$), the resulting sequence over $C_{q^m}$ has cross number $\mathsf{k}(\tau(T_0)) + \mathsf{k}(\tau(T_1))$. But since $\mathsf{k}(C_{q^m}) = 1 - \frac{1}{q^m}$, every subsequence with cross number at least $1$ in $C_{q^m}$ will have a zero-sum subsequence with cross number at most $1$. Therefore, we can find $\lfloor \mathsf{k}(\tau(T_0)) + \mathsf{k}(\tau(T_1)) \rfloor$ nonoverlapping zero-sum subsequences in the projection of $T_0 T_1$.

Replace the preimage of these zero-sum subsequences with its respective sum: denote this replacement $T_0 T_1 \mapsto Q_1 R_1$, where $Q_1$ is the ``replaced" sums, and $R_1$ is the ``leftover" elements that have not been replaced. Since $S$ is a zero-sum free sequence, elements of $Q_1$ have order $p$, and $Q_1$ is a sequence of length $\lfloor \mathsf{k}(\tau(T_0)) + \mathsf{k}(\tau(T_1)) \rfloor$. The replacement is expressed as follows:

$$ S = S_p T_0 T_1 T_2 \cdots T_k \mapsto S_p Q_1 R_1 T_2 T_3 \cdots T_k .$$

Now we inductively repeat this process. For each $i < k$, assume that $S$ has been replaced by $S_p Q_1 Q_2 \cdots Q_i R_i T_{i+1} \cdots T_k$. Project the subsequence $R_i T_{i+1}$ onto the $C_{q^m}$-coordinate (denote this projection $\tau: G \mapsto C_{q^m}$). The resulting sequence over $C_{q^m}$ has cross number $\mathsf{k}(\tau(R_i)) + \mathsf{k}(\tau(T_{i+1}))$. Arguing as above, we can find $\lfloor \mathsf{k}(\tau(R_i)) + \mathsf{k}(\tau(T_{i+1})) \rfloor $ nonoverlapping zero-sum subsequences in the projection of $R_i T_{i+1}$.

Replace the preimage of these zero-sum subsequences with its respective sum: denote this replacement $R_i T_{i+1} \mapsto Q_{i+1} R_{i+1}$, where $Q_{i+1}$ is the ``replaced" sums, and $R_{i+1}$ is the ``leftover" elements that have not been replaced as a part of a sum. Elements of $Q_{i+1}$ have order dividing $p^{i+1}$ since their projection onto the $C_{q^m}$ coordinate is zero.

Repeating this process for $1 \leq i < k$, we can apply the following transformation to $S$:

$$S = S_p T_0 T_1 T_2 \cdots T_k \mapsto S_p Q_1 Q_2 \cdots Q_k R_k = S'.$$

\noindent Note that the only transformation on $S$ was replacing some groups of elements by their sum. So the zero-sum free property is preserved on $S'$, but now all elements of the subsequence $S_p Q_1 \cdots Q_k$ are in $H_p$ (they have zero $C_{q^m}$-component).

Thus we have the inequality

\begin{equation}
\label{eqn:replace}
\mathsf{k}(H_p) \geq \mathsf{k}(S_p Q_1 \cdots Q_k) = \mathsf{k}(S_p) + \mathsf{k}(Q_1) + \cdots + \mathsf{k}(Q_k).
\end{equation}

\noindent We know that $\mathsf{k}(S_p) = \sum_{i=1}^{k} \frac{a(p^i)}{p^i}$, so it remains to determine $\mathsf{k}(Q_i)$ for each $i$. But since each $Q_i$ consists of elements of order dividing $p^i$, we have $$\mathsf{k}(Q_i) \geq \frac{|Q_i|}{p^i}.$$ Thus it suffices to determine $|Q_i|$. But from the construction process and from the definition, we know the following. 
\begin{align*}
|Q_1| = & \lfloor \mathsf{k}(\tau(T_0)) + \mathsf{k}(\tau(T_1)) \rfloor \\
\mathsf{k}(\tau(R_1)) = & \{ \mathsf{k}(\tau(T_0)) + \mathsf{k}(\tau(T_1)) \} \\
|Q_i| = & \lfloor \mathsf{k}(\tau(R_{i-1})) + \mathsf{k}(\tau(T_i)) \rfloor ~~~ (i \geq 2) \\
\mathsf{k}(\tau(R_i)) = & \{ \mathsf{k}(\tau(R_{i-1})) + \mathsf{k}(\tau(T_i)) \} ~~~ (i \geq 2) \\
\mathsf{k}(\tau(T_i)) = & \sum_{j=1}^{m} \frac{a(p^i q^j)}{q^j},
\end{align*}
where $\{ x\}$ denotes the fractional part of $x$ $(\{ x\}= x - \lfloor x \rfloor)$.

For convenience we write $R_0 = T_0$: this allows us to use the $Q_i$ equations even when $i=1$. Rewrite Inequality~\ref{eqn:replace} as
\begin{equation}
\label{eqn:pqineq}
\mathsf{k}(H_p) \geq \mathsf{k}(S_p) + \mathsf{k}(Q_1) + \cdots + \mathsf{k}(Q_k) \geq \sum_{i=1}^{k} \frac{a(p^i)}{p^i} + \sum_{i=1}^k \frac{|Q_i|}{p^i}.
\end{equation}
To simplify the sum $\sum_{i=1}^k \frac{|Q_i|}{p^i}$, we use the following subclaim.

\begin{lem}
\label{lem:floorsum1}
Given a sequence $t_1, t_2, \ldots, t_n \in \mathbb{Q}$, let $s_1, s_2, \ldots, s_n$ be a sequence satisfying
\begin{align*}
s_1 & = t_1 \\
s_{i} & = \{ s_{i-1} \} + t_{i} ~~ (i \geq 2).
\end{align*}
\noindent If we further assume that $s_i b, t_i b \in \Z$ for all $i$ for some $b \in \Z_{>0}$, then the inequality

$$ \sum_{i=1}^{n} \frac{\lfloor s_i \rfloor}{p^i} \geq \sum_{i=1}^{n} \frac{t_i}{p^i} + \frac{1}{p}\left(\frac{1}{b} - 1\right) $$
\noindent holds, with equality iff $s_i + \frac{1}{b} \in \Z$ for all $i$.
\end{lem}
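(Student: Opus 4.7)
\medskip
\noindent\textbf{Proof plan.}
The plan is to rewrite $t_i - \lfloor s_i \rfloor$ as a telescoping difference of fractional parts and then control the resulting sum using the divisibility assumption $b s_i \in \Z$. Setting $\{s_0\} := 0$, the recursion $s_i = \{s_{i-1}\} + t_i$ holds uniformly for all $i \geq 1$ (the $i=1$ case reduces to $s_1 = t_1$), and taking floors yields $t_i - \lfloor s_i \rfloor = \{s_i\} - \{s_{i-1}\}$. The inequality to be proved thus becomes
$$\sum_{i=1}^n \frac{\{s_i\} - \{s_{i-1}\}}{p^i} \leq \frac{b-1}{pb}.$$

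Next, I would apply Abel summation to the left-hand side. Splitting the sum into its two pieces, reindexing $j = i-1$ in the second, and discarding the $j=0$ term (which vanishes by $\{s_0\}=0$) produces the identity
$$\sum_{i=1}^n \frac{\{s_i\} - \{s_{i-1}\}}{p^i} = \frac{\{s_n\}}{p^n} + \frac{p-1}{p}\sum_{i=1}^{n-1} \frac{\{s_i\}}{p^i}.$$
The decisive feature of this reformulation is that each $\{s_i\}$ now appears with a nonnegative coefficient, so term-wise upper bounds may be substituted cleanly.

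The hypothesis $b s_i \in \Z$ confines $\{s_i\}$ to the finite set $\{0, 1/b, \ldots, (b-1)/b\}$, so $\{s_i\} \leq (b-1)/b$ for every $i$. Inserting this bound and evaluating the geometric series, one verifies that
$$\frac{1}{p^n} + \frac{p-1}{p}\sum_{i=1}^{n-1} \frac{1}{p^i} = \frac{1}{p},$$
whence the left-hand side of the target inequality is at most $\frac{b-1}{b} \cdot \frac{1}{p} = \frac{b-1}{pb}$, as required. For the equality clause, the term-wise substitution is tight exactly when $\{s_i\} = (b-1)/b$ for all $i$, i.e., $s_i + 1/b \in \Z$ for all $i$.

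The only step that demands any real thought is the Abel summation: one must arrange it so that every coefficient of $\{s_i\}$ ends up nonnegative, after which the bound follows mechanically. The fact that the auxiliary geometric sum collapses exactly to $1/p$ is a reassuring sanity check that the telescoping has been set up correctly and that the constant $(b-1)/(pb)$ on the right is the sharp one.
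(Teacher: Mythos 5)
Your proof is correct, and it takes a genuinely different route from the paper's. The paper argues by induction on $n$: it peels off the $i=1$ term, applies the elementary bound $\lfloor x \rfloor \geq x + \tfrac{1}{b} - 1$ (valid when $xb \in \Z$) to that term, and invokes the inductive hypothesis on the shifted sequence $s_2, \ldots, s_n$. You instead observe the identity $t_i - \lfloor s_i \rfloor = \{s_i\} - \{s_{i-1}\}$ (with $\{s_0\} = 0$), reduce the claim to $\sum_{i=1}^n p^{-i}\left(\{s_i\} - \{s_{i-1}\}\right) \leq \tfrac{b-1}{pb}$, and then use Abel summation to obtain $\tfrac{\{s_n\}}{p^n} + \tfrac{p-1}{p}\sum_{i=1}^{n-1}\tfrac{\{s_i\}}{p^i}$, in which every $\{s_i\}$ carries a strictly positive coefficient; the bound $\{s_i\} \leq \tfrac{b-1}{b}$ (the same arithmetic input as the paper's floor inequality) and the exact evaluation of the coefficient sum as $\tfrac{1}{p}$ finish the argument. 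Your closed-form version buys a cleaner treatment of the equality clause: since the only inequalities in your chain are the termwise substitutions $\{s_i\} \leq \tfrac{b-1}{b}$ with positive weights, equality visibly forces $\{s_i\} = \tfrac{b-1}{b}$, i.e.\ $s_i + \tfrac1b \in \Z$, for every $i$, whereas in the inductive proof this has to be traced through each application of the floor bound. The paper's induction, on the other hand, adapts more directly to the variant Lemma~\ref{lem:floorsum2} (where $s_n \in \Z$ changes the constant); your telescoping handles that case too, by noting $\{s_n\} = 0$ kills the $p^{-n}$ term, but that adjustment is not spelled out here and is not needed for the statement as given.
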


\begin{proof}[Proof of Lemma]
We first note that if $xb  \in \mathbb{Z}$, then the inequality
\begin{equation}
\label{eqn:floor}
\lfloor x \rfloor \geq x + \frac{1}{b} - 1
\end{equation}
holds, with equality if and only if $x + \frac{1}{b} \in \mathbb{Z}$.

Proceed by induction on $n$. When $n=1$, the inequality is just $\frac{\lfloor t_1 \rfloor}{p} \geq \frac{t_1 + 1/b - 1}{p}$ which holds by Inequality~\ref{eqn:floor}. Now assume the claim holds for $n=k-1$: for $n=k$, we transform the sum as follows:
\begin{align*}
\sum_{i=1}^k \frac{\lfloor s_i \rfloor}{p^i} & = \frac{\lfloor t_1 \rfloor}{p} + \sum_{i=2}^{k} \frac{\lfloor s_i \rfloor}{p^i} \\
& = \frac{\lfloor t_1 \rfloor(p-1)}{p^2} + \frac{\lfloor t_1 \rfloor}{p^2} + \frac{1}{p} \sum_{i=1}^{k-1} \frac{\lfloor s_{i+1} \rfloor}{p^i} \\
& \geq \frac{(t_1 + 1/b - 1)(p-1)}{p^2} + \frac{\lfloor t_1 \rfloor}{p^2} + \frac{1}{p}\left( \frac{\{s_1\}}{p} +  \sum_{i=1}^{k-1} \frac{t_{i+1}}{p^{i}} + \left(\frac{1}{b} -1\right) \frac{1}{p}\right) \\
& = \frac{t_1 (p-1)}{p^2} + \frac{\lfloor t_1 \rfloor}{p^2} + \frac{\{t_1\}}{p^2} + \sum_{i=2}^{k} \frac{t_i}{p^i} + \frac{1}{p}\left(\frac{1}{b} - 1\right) \\
& = \sum_{i=1}^{k} \frac{t_i}{p^i} + \frac{1}{p}\left(\frac{1}{b} - 1\right).
\end{align*}
\noindent Here in the inequality, we transform the term $\frac{\lfloor t_1 \rfloor(p-1)}{p^2}$ using Inequality~\ref{eqn:floor}, and apply the inductive hypothesis to the sequence $s_2, s_3, \ldots, s_n$.

Thus Lemma~\ref{lem:floorsum1} holds for $n=k$ as well.
\end{proof}
Now define the sequence $\{s_i\}$ as $s_1 = \mathsf{k}(\tau(T_0)) + \mathsf{k}(\tau(T_1))$ and $s_{i+1} = \{s_i\} + \mathsf{k}(\tau(T_i))$. Then we have $\lfloor s_i \rfloor = |Q_i|$ and $\{ s_i \} = \mathsf{k}(\tau(R_i))$. Noting that $q^m s_i \in \mathbb{Z}$ for all $i$, we apply Lemma~\ref{lem:floorsum1} to this sequence to obtain
$$ \sum_{i=1}^{k} \frac{|Q_i|}{p^i} \geq  \frac{\mathsf{k}(\tau(T_0)) + \mathsf{k}(\tau(T_1))}{p} + \sum_{i=2}^{k} \frac{\mathsf{k}(\tau(T_i))}{p^i} + \left(\frac{1}{q^m} - 1\right) \frac{1}{p}.$$
Using this inequality, Inequality~\ref{eqn:pqineq} becomes
\begin{align*}
\mathsf{k}(H_p) & \geq \mathsf{k}(S_p) + \mathsf{k}(Q_1) + \cdots + \mathsf{k}(Q_k) \geq \sum_{i=1}^{k} \frac{a(p^i)}{p^i} + \sum_{i=1}^k \frac{|Q_i|}{p^i} \\
& \geq \sum_{i=1}^{k} \frac{a(p^i)}{p^i} + \frac{\mathsf{k}(\tau(T_0))}{p} + \sum_{i=1}^{k} \frac{\mathsf{k}(\tau(T_i))}{p^i} + \frac{1}{p}\left(\frac{1}{q^m} - 1\right) \\
& = \sum_{i=1}^{k} \frac{a(p^i)}{p^i} + \sum_{j=1}^{m} \frac{a(q^j)}{pq^j} + \sum_{i=1}^{k} \sum_{j=1}^{m} \frac{a(p^i q^j)}{p^i q^j} + \frac{1}{p}\left(\frac{1}{q^m} - 1\right).
\end{align*}
\noindent Note that the right-hand side of the above inequality resembles the cross number of this sequence. In fact, if we add the inequality

\begin{equation}
\label{eqn:8-1IH}
\mathsf{k}(C_{q^m}) \frac{p-1}{p} = \left(1 - \frac{1}{q^m}\right)\left(1 - \frac{1}{p}\right) \geq \sum_{j=1}^{m} \frac{a(q^j)(p-1)}{pq^j}
\end{equation}
\noindent to both sides, the right hand side becomes $\sum_{d|p^k q^m} \frac{a(d)}{d} = \mathsf{k}(S)$.

Thus we have
$$\mathsf{k}(S) \leq \mathsf{k}(H_p) + \left(1 - \frac{1}{q^m}\right)\left(1 - \frac{1}{p}\right) - \left(\frac{1}{q^m} - 1\right) \frac{1}{p} = \mathsf{k}(H_p) + 1 - \frac{1}{q^m} = \mathsf{k}^{\star}(G),$$

\noindent which implies that $\ms{k}(G) = \ms{k}^\star(G)$.

\textit{Step 2.} We now show that each zero-sum free sequence $S$ with $\ms{k}(S) = \ms{k}^\star (G)$ has the form $S = S_p S_q$ where $S_p$ is a zero-sum free sequence over the $p$-primary component of $G$ and $S_q$ is a zero-sum free sequence over the $q$-primary component of $G$.

Write $G = H_p \bigoplus C_{q^m}$. To show this, first denote by $S_q$ the subsequence of all elements of $S$ of order $q^j$ for some $j$. Since we added inequalities to prove $\ms{k}^\star(G) \leq \ms{k}(S) \leq \ms{k}^\star(G)$, equality must hold everywhere. Then equality must hold in Equation~\ref{eqn:8-1IH}, which implies that $\sum \frac{a(q^j)}{q^j} = \ms{k}(C_{q^m})$. But the left-hand-side is $\ms{k}(S_q)$. Clearly, $S$ zero-sum free implies $S_q$ is zero-sum free, so $\ms{k}(S_q) = \ms{k}(C_{q^m})$ implies the sumset $\Sigma(S_q)$ contains all nonzero elements of $C_{q^m}$.

But then $S(S_q)^{-1}$ cannot have any subsums lying in $C_{q^m}$, for otherwise we could form a zero-sum subsequence in $S$ with a subsequence in $S_q$. Therefore, even after projecting $S(S_q)^{-1}$ to the $H_p$-component, the sequence is still zero-sum free. Denote the projected sequence as $R$, then $\ms{k}(H_p) \geq \ms{k}(R) \geq \ms{k}(S(S_q)^{-1})$ with equality iff all entries of $S(S_q)^{-1}$ have zero $C_{q^m}$-coordinate - that is, all entries have order $p^i$ for some $i$.

Then we have $\ms{k}(H_p) \geq \ms{k}(S(S_q)^{-1}) = \ms{k}(S) - \ms{k}(S_q) = \ms{k}(H_p) + \ms{k}(C_{q^m}) - \ms{k}(C_{q^m}) = \ms{k}(H_p)$. Since the leftmost side and the rightmost side are equal, equality must hold everywhere and $S = (S(S_q^{-1}))(S_q)$ can be decomposed into a $p$-primary component and a $q$-primary component.


\textit{Step 3.} Now we proceed to prove that $\ms{K}(G) = \ms{K}^\star(G)$. Choose a minimal zero-sum sequence $U$ with $\ms{k}(U) = \ms{K}(G)$.

Proceed with the projection argument as we did in Step 1 to transform $U$ with

$$U = U_p T_0 T_1 \cdots T_k \mapsto U_p Q_1 Q_2 \cdots Q_k R_k = U'.$$

Since the only transformation on $U$ was replacing some groups of elements by their sum, $U'$ should also be minimal zero-sum. if $|R_k| > 0$, then $U_p Q_1 Q_2 \cdots Q_k$ is a zero-sum free sequence over $H_p$, whence we have
$$\ms{k}(H_p) \geq \ms{k}(U_p) + \ms{k}(Q_1) + \cdots + \ms{k}(Q_k),$$
which upon transformation gives $\ms{K}^{\star}(G) \geq \ms{k}^{\star}(G) \geq \ms{k}(S)$.

Now if $|R_k| = 0$, this is equivalent to the statement that $|Q_k| = \ms{k}(\tau(R_{k-1})) + \ms{k}(\tau(T_k))$ (we can remove the floor of the last number in the sum.) Then we can modify Lemma~\ref{lem:floorsum1} as follows:

\begin{lem}
\label{lem:floorsum2}
Given a sequence $t_1, t_2, \ldots, t_n \in \mathbb{Q}$, let $s_1, s_2, \ldots, s_n$ be a sequence satisfying
\begin{align*}
s_1 & = t_1 \\
s_{i} & = \{ s_{i-1} \} + t_{i} ~~ (i \geq 2).
\end{align*}
\noindent If we further assume that $s_i b, t_i b \in \mathbb{Z}$ for all $i$ for some $b \in \mathbb{Z}_{>0}$ and $s_n \in \mathbb{Z}$, then the inequality

$$ \sum_{i=1}^{n} \frac{\lfloor s_i \rfloor}{p^i} \geq \sum_{i=1}^{n} \frac{t_i}{p^i} + (\frac{1}{p} - \frac{1}{p^n})\left(\frac{1}{b} - 1\right) $$
\noindent holds, with equality iff $s_i + \frac{1}{b} \in \Z$ for all $i < n$.
\end{lem}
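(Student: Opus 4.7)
The plan is to mirror the proof of Lemma~\ref{lem:floorsum1} almost verbatim, exploiting the extra hypothesis $s_n \in \Z$ to sharpen the terminal step of the induction. The key conceptual observation is that in Lemma~\ref{lem:floorsum1} we invoked $\lfloor x \rfloor \geq x + \frac{1}{b} - 1$ at every index, whereas here we have $\lfloor s_n \rfloor = s_n$ for free at the terminal index. The saving is exactly $\frac{1}{p^n}\left(\frac{1}{b} - 1\right)$, and the identity
\[
\frac{1}{p}\!\left(\frac{1}{b} - 1\right) - \frac{1}{p^n}\!\left(\frac{1}{b} - 1\right) = \left(\frac{1}{p} - \frac{1}{p^n}\right)\!\left(\frac{1}{b} - 1\right)
\]
accounts for the sharpened constant in the statement.

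Concretely, I would induct on $n$. In the base case $n=1$, the hypothesis $s_n \in \Z$ becomes $t_1 \in \Z$, so $\lfloor t_1 \rfloor = t_1$ and both sides of the target inequality equal $t_1/p$; the equality condition is vacuous. For the inductive step, I apply the inductive hypothesis to the shifted sequence $t'_i := t_{i+1}$ with $s'_1 := \{s_1\} + t_2 = s_2$, whose terminal entry is $s'_{n-1} = s_n \in \Z$, exactly the required integrality. I then follow the same telescoping as in Lemma~\ref{lem:floorsum1}: split $\lfloor t_1 \rfloor / p = \lfloor t_1 \rfloor (p-1)/p^2 + \lfloor t_1 \rfloor /p^2$, bound the first summand using $\lfloor t_1 \rfloor \geq t_1 + 1/b - 1$, and substitute the IH estimate into the second. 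Collecting the residual $(1/b - 1)$-contributions yields
\[
\left(\frac{1}{b} - 1\right)\!\left(\frac{p-1}{p^2} + \frac{1}{p^2} - \frac{1}{p^n}\right) = \left(\frac{1}{b} - 1\right)\!\left(\frac{1}{p} - \frac{1}{p^n}\right),
\]
closing the induction.

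For the equality clause, I would trace which applications of $\lfloor x \rfloor \geq x + \frac{1}{b} - 1$ actually appear in the argument: only at $s_1, \ldots, s_{n-1}$, since $\lfloor s_n \rfloor = s_n$ is invoked as an equality. Each of the former contributes equality precisely when $s_i + \frac{1}{b} \in \Z$, which produces the stated ``$i < n$'' restriction. The main delicate step, and the one I expect to be the primary source of bookkeeping error, is verifying in the inductive step that multiplying the IH constant $(1/p - 1/p^{n-1})(1/b - 1)$ by the external factor $1/p$ yields $(1/p^2 - 1/p^n)(1/b-1)$, which must then combine cleanly with the $(p-1)/p^2$ term arising from the $\lfloor t_1 \rfloor$ estimate via the identity $(p-1)/p^2 + 1/p^2 = 1/p$. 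This is routine but easy to miscount.
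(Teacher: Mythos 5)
Your proposal is correct and follows essentially the same route as the paper's proof: the same induction, the same splitting of $\lfloor t_1 \rfloor/p$ into $\lfloor t_1 \rfloor(p-1)/p^2 + \lfloor t_1 \rfloor/p^2$, the same application of the inductive hypothesis to the shifted sequence beginning with $\{s_1\} + t_2 = s_2$ (whose terminal entry inherits $s_n \in \Z$), and the same bookkeeping identity $(p-1)/p^2 + 1/p^2 - 1/p^n = 1/p - 1/p^n$ for the constant. Your tracing of the equality condition to the indices $i < n$ where the floor inequality is actually invoked is also consistent with the statement and with the paper's (implicit) treatment.
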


\begin{proof}[Proof of Lemma]
We proceed by induction. When $n=1$, $s_1 \in \mathbb{Z}$ so both sides are clearly equal. Now assume the lemma holds for $n=k-1$. For $n=k$, we do the same transformation as we did in the proof of Lemma~\ref{lem:floorsum1}:
\begin{align*}
\sum_{i=1}^k \frac{\lfloor s_i \rfloor}{p^i} & = \frac{\lfloor t_1 \rfloor}{p} + \sum_{i=2}^{k} \frac{\lfloor s_i \rfloor}{p^i} \\
& \geq \frac{(t_1 + 1/b - 1)(p-1)}{p^2} + \frac{\lfloor t_1 \rfloor}{p^2} \\
& + \frac{1}{p}\left( \frac{\{s_1\}}{p} +  \sum_{i=1}^{k-1} \frac{t_{i+1}}{p^{i}} + \left(\frac{1}{b} -1\right) \left( \frac{1}{p} - \frac{1}{p^{k-1}}\right)\right) \\
& = \frac{t_1 (p-1)}{p^2} + \frac{\lfloor t_1 \rfloor}{p^2} + \frac{\{t_1\}}{p^2} + \sum_{i=2}^{k} \frac{t_i}{p^i} + \left(\frac{1}{p} - \frac{1}{p^k}\right)\left(\frac{1}{b} - 1\right) \\
& = \sum_{i=1}^{k} \frac{t_i}{p^i} + \left(\frac{1}{p} - \frac{1}{p^k}\right)\left(\frac{1}{b} - 1\right).
\end{align*}
which completes the proof.
\end{proof}

Using this lemma and the fact that $U_p Q_1 Q_2 \cdots Q_k$ is a minimal zero-sum sequence over $H_p$, we have
\begin{align*}
\ms{K}(H_p) & \geq \ms{k}(U_p) + \mathsf{k}(Q_1) + \cdots + \mathsf{k}(Q_k) \geq \sum_{i=1}^{k} \frac{a(p^i)}{p^i} + \sum_{i=1}^k \frac{|Q_i|}{p^i} \\
& \geq \sum_{i=1}^{k} \frac{a(p^i)}{p^i} + \frac{\mathsf{k}(\tau(T_0))}{p} + \sum_{i=1}^{k} \frac{\mathsf{k}(\tau(T_i))}{p^i} + \left(\frac{1}{p} - \frac{1}{p^k} \right)\left(\frac{1}{q^m} - 1\right) \\
& = \sum_{i=1}^{k} \frac{a(p^i)}{p^i} + \sum_{j=1}^{m} \frac{a(q^j)}{pq^j} + \sum_{i=1}^{k} \sum_{j=1}^{m} \frac{a(p^i q^j)}{p^i q^j} + \left(\frac{1}{p} - \frac{1}{p^k} \right)\left(\frac{1}{q^m} - 1\right).
\end{align*}

Now, the sequence consisting of elements of order $q^j$ $(1 \leq j \leq m)$ cannot be the entire sequence $U$ (since $U$ is maximal, there must be terms of order divisible by $p$). Thus such elements form a zero-sum free sequence, and thus we have the inequality
\begin{equation}
\label{eqn:8-1IH2}
\mathsf{k}(C_{q^m}) \frac{p-1}{p} = \left(1 - \frac{1}{q^m}\right)\left(1 - \frac{1}{p}\right) \geq \sum_{j=1}^{m} \frac{a(q^j)(p-1)}{pq^j}
\end{equation}

Add this inequality to the inequality above to have
\begin{align*}
\mathsf{k}(U) & \leq \mathsf{K}(H_p) + \left(1 - \frac{1}{q^m}\right)\left(1 - \frac{1}{p}\right) - \left(\frac{1}{q^m} - 1\right) \left(\frac{1}{p} - \frac{1}{p^k}\right) \\
& = \mathsf{k}(H_p) + 1 - \frac{1}{q^m} + \frac{1}{p^k q^m} = \mathsf{K}^{\star}(G)
\end{align*}
which shows that $\ms{K}(G) = \ms{K}^\star(G)$, as desired.


\textit{Step 4.} Now we prove the structural result: that is, each minimal zero-sum sequence $U$ over $G$ with $\ms{k}(U) = \ms{K}(G)$ has the form $U = g U_p U_q$ where $U_p$ is a zero-sum free sequence of the $p$-primary component of $G$, and analogously for $U_q$.

Recall that $G = H_p \bigoplus C_{q^m}$. Assume we have a minimal zero-sum sequence $U$ over $G$ with $\ms{k}(U) = \ms{K}(G) = \ms{k}(H_p) + \ms{k}({C_q}^m) + \frac{1}{p^k q^m}$ where $p^k = \exp(H_p)$. Write $U = U_p U_q R$, where $U_p$ is the subsequence of $U$ with elements of order $p^i$ for some $i$, $U_q$ is the subsequence of $U$ with elements of order $q^j$ for some $j$, and $R$ is everything else. Since we added inequalities to prove $\ms{k}(S) \leq \ms{K}^\star(G)$, equality must hold everywhere. Then equality must hold in Equation~\ref{eqn:8-1IH2}, which implies that $\sum \frac{a(q^j)}{q^j} = \ms{k}(C_{q^m})$. But the left-hand-side is $\ms{k}(U_q)$. Clearly, $U$ minimal zero-sum implies $U_q$ is zero-sum free, so $\ms{k}(U_q) = \ms{k}(C_{q^m})$ implies the sumset $\Sigma(U_q)$ contains all nonzero elements of $C_{q^m}$.

But then $U_p R$ cannot have any proper subsequence whose sum lies in $C_{q^m}$, for otherwise we could form a zero-sum proper subsequence of $S$ by adding it with a subsequence in $U_q$. Therefore, after projecting $U_p R$ to the $H_p$-component, the sequence is still minimal zero-sum. (It should be zero-sum because $U$ was initially zero-sum and $U_q$ has zero $H_p$-component.) If we denote the projection of $R$ as $R_p$, then we have
\begin{align*}
\ms{K}(H_p) & = \ms{k}(H_p) + \frac{1}{p^k} \geq \ms{k}(S_p R_p) \\
& = \ms{k}(S_p) + \ms{k} (R_p) \\
& = \ms{k}(S) - \ms{k}(S_q) - \ms{k}(R) + \ms{k}(R_p) \\
& = \ms{k}(H_p) + \frac{1}{p^k q^m} - \ms{k}(R) + \ms{k}(R_p) \\
\end{align*}
and thus $\ms{k}(R_p) - \ms{k}(R) \leq \frac{1}{p^k} - \frac{1}{p^k q^m}$.

Now write $R = r_1 r_2 \cdots r_t$, and $\ord(r_i) = p^{a_i} q^{b_i}$, with $1 \leq a_i \leq k$ and $1 \leq b_i \leq m$. Then the above inequality becomes

$$\frac{1}{p^k} - \frac{1}{p^k q^m} \geq \sum_{i=1}^{t} \left( \frac{1}{p^{a_i}} - \frac{1}{p^{a_i} q^{b_i}} \right).$$

Multiply $p^k$ to both sides to have $1 - \frac{1}{q^m} \geq \sum_{i=1}^{t} p^{k - a_i} ( 1 - \frac{1}{q^{b_i}})$. If $t \geq 2$ then the right-hand side is not less than 1, so the inequality is violated: thus $t=1$, and we must also have $a_1 = k$ in that case.

Thus $R$ is actually a one-element sequence with its $H_p$-projection order $p^k$. Since we have that the projection of $U_q R$ onto $C_{q^m}$ is zero-sum and $U_q$ has cross number $\ms{k}(U_q) = \ms{k}(C_{q^m}) = 1 - \frac{1}{q^m}$, the projection of $R$ onto $C_{q^m}$ must have order $q^m$.

Thus $U = U_p U_q R$ with $R$ is a single-element sequence whose element has order $p^k q^m$, and $U_p$, $U_q$ are zero-sum free sequences in $H_p$ and $C_{q^m}$ respectively. This completes the proof on the structure.
\end{proof}

A similar technique works for the second part of Theorem~\ref{thm:result2}.

\begin{proof}[Proof of Theorem~\ref{thm:result2}, Part 2]

We show Theorem~\ref{thm:result2} for $G = C_p \bigoplus C_q \bigoplus C_r$ where $p,q,r$ are distinct primes. We proceed as follows.

\textit{Step 1.} We first wish to show that $\mathsf{k}(G) = \mathsf{k}^{\star}(G)$ for $G = C_p \bigoplus C_q \bigoplus C_r$. Choose the zero-sum free sequence $S$ in $G$ with maximal cross number. It suffices to show $\mathsf{k}(S) \leq \mathsf{k}^{\star}(G)$. Denote by $a(n)$ the number of elements of order $n$ in the sequence $S$. Write $S = S_p S_q S_r S_{pq} S_{pr} S_{qr} S_{pqr}$ where $S_i$ is the subsequence of $S$ consisting of elements of order $i$.

Project the subsequence $S_{q} S_{pq}$ onto the $C_{q}$-coordinate (denote this projection $\tau_1 : G \mapsto C_q$). The resulting sequence over $C_q$ has cross number $\frac{a(q)+a(pq)}{q}$. But every subsequence with cross number at least $1$ in $C_q$ will have a zero-sum subsequence with cross number at most $1$. Therefore, we can find $\lfloor \frac{a(q)+a(pq)}{q} \rfloor$ nonoverlapping zero-sum subsequences in the projection of $S_q S_{pq}$.

In the original sequence $S$, replace the preimage of these zero-sum subsequences with their respective sums: denote this replacement $S_q S_{pq} \mapsto Q_1 R_1$ where $Q_1$ is the ``replaced" sums and $R_1$ is the ``leftover" elements that have not been replaced. Call the resulting sequence $S_1 = S_p Q_1 R_1 S_r S_{pr} S_{qr} S_{pqr}$. Elements of $Q_1$ have order $p$ and $Q_1$ has length $\lfloor \frac{a(q)+a(pq)}{q} \rfloor$.

Project the sequence $R_1 S_{qr} S_{pqr}$ onto the $C_{q}$ coordinate ($\tau_1$). The resulting sequence has cross number at least $\{\frac{a(q)+a(pq)}{q}\} + \frac{a(qr) + a(pqr)}{q}$, the first term from $R_1$ since it was the ``remainder" from the projection above. Using analogous reasoning as above, we can find $\lfloor \{\frac{a(q)+a(pq)}{q}\} + \frac{a(qr) + a(pqr)}{q} \rfloor$ nonoverlapping zero-sum sequences in this projection.

In the sequence $S_1$, replace the preimage of these zero-sum subsequences with its respective sum: denote this replacement $R_1 S_{qr} S_{pqr} \mapsto Q_2 R_2$ where $Q_2$ is the ``replaced" sums and $R_2$ is the ``leftover" elements that have not been replaced. Call the resulting sequence $S_2 = S_p Q_1 Q_2 R_2 S_r S_{pr}$. Elements of $Q_2$ have order dividing $pr$ and $Q_2$ has length $\lfloor \{\frac{a(q)+a(pq)}{q}\} + \frac{a(qr) + a(pqr)}{q} \rfloor$.

Now, project the sequence $Q_2 S_r S_{pr}$ onto the $C_{r}$ coordinate (denote this projection $\tau_2 : G \mapsto C_r$). The resulting sequence over $C_r$ has cross number 
$$\frac{\lfloor \{\frac{a(q)+a(pq)}{q}\} + \frac{a(qr) + a(pqr)}{q} \rfloor}{r} + \frac{a(r)+a(pr)}{r}.$$
By an analogous logic, we can find
$$\left\lfloor \frac{\lfloor \{\frac{a(q)+a(pq)}{q}\} + \frac{a(qr) + a(pqr)}{q} \rfloor}{r} + \frac{a(r)+a(pr)}{r}\right\rfloor$$
\noindent nonoverlapping zero-sum sequences in this projection.

In the sequence $S_2$, replace the preimage of these zero-sum subsequences with its respective sum: denote this replacement $Q_2 S_r S_{pr} \mapsto Q_3 R_3$ where $Q_3$ is the replaced sums and $R_3$ the leftover elements. Call the resulting sequence $S_3 = S_p Q_1 Q_3 R_2 R_3$.

Now, the transformation $S \mapsto S_3$ consists only of replacing some elements by their sum, so $S_3$ still is zero-sum free. But the subsequence $S_p Q_1 Q_3$ is a zero-sum sequence with all its elements in $C_p$ (the other coordinates are zero).

Thus we must have
\begin{align*}
1 - \frac{1}{p} & \geq \mathsf{k}(S_p Q_1 Q_3) = \mathsf{k}(S_p) + \mathsf{k}(Q_1) + \mathsf{k}(Q_3) \\
& \geq \frac{a(p)}{p} + \frac{1}{p} \left\lfloor \frac{a(q)+a(pq)}{q} \right\rfloor + \frac{1}{p} \left\lfloor \frac{\lfloor \{\frac{a(q)+a(pq)}{q}\} + \frac{a(qr) + a(pqr)}{q} \rfloor}{r} + \frac{a(r)+a(pr)}{r}\right\rfloor \\
& \geq \frac{a(p)}{p} + \frac{1}{p} \left\lfloor \frac{a(q)+a(pq)}{q} \right\rfloor \\
& + \frac{1}{p} \left( \frac{\lfloor \{\frac{a(q)+a(pq)}{q}\} + \frac{a(qr) + a(pqr)}{q} \rfloor}{r} + \frac{a(r)+a(pr)}{r} + \frac{1}{r} - 1 \right) \\
& \geq \frac{a(p)}{p} + \frac{1}{p} \left\lfloor \frac{a(q)+a(pq)}{q} \right\rfloor \\
& + \frac{1}{p} \left( \frac{ \{\frac{a(q)+a(pq)}{q}\} + \frac{a(qr) + a(pqr)}{q} + \frac{1}{q} - 1}{r} + \frac{a(r)+a(pr)}{r} + \frac{1}{r} - 1 \right) \\
& = \frac{a(p)}{p} + \frac{1}{p} \left\lfloor \frac{a(q)+a(pq)}{q} \right\rfloor + \frac{1}{pr}  \left\lbrace\frac{a(q)+a(pq)}{q}\right\rbrace \\
& + \frac{a(qr)}{pqr} + \frac{a(pqr)}{pqr} + \frac{a(r)}{pr} + \frac{a(pr)}{pr} + \frac{1}{pqr} - \frac{1}{p} \\
& = \frac{a(p)}{p} + \frac{r-1}{pr} \left\lfloor \frac{a(q)+a(pq)}{q} \right\rfloor + \frac{1}{pr} \frac{a(q)+a(pq)}{q} \\
& + \frac{a(qr)}{pqr} + \frac{a(pqr)}{pqr} + \frac{a(r)}{pr} + \frac{a(pr)}{pr} + \frac{1}{pqr} - \frac{1}{p} \\
& \geq \frac{a(p)}{p} + \frac{r-1}{pr}\left(\frac{a(q)+a(pq)}{q} + \frac{1}{q} - 1\right) + \frac{1}{pr} \frac{a(q)+a(pq)}{q} \\
& + \frac{a(qr)}{pqr} + \frac{a(pqr)}{pqr} + \frac{a(r)}{pr} + \frac{a(pr)}{pr} + \frac{1}{pqr} - \frac{1}{p} \\
& = \frac{a(p)}{p} + \frac{a(pq)}{pq} + \frac{a(pr)}{pr} + \frac{a(pqr)}{pqr} + \frac{a(q)}{pq} + \frac{a(r)}{pr} + \frac{a(qr)}{pqr} + \frac{1}{pq} + \frac{1}{pr} - \frac{2}{p}.
\end{align*}

\noindent Now, if we add $$\frac{p-1}{p} \left( 1 - \frac{1}{q} + 1 - \frac{1}{r} \right) \geq \frac{p-1}{p} \left( \frac{a(q)}{q} + \frac{a(r)}{r} + \frac{a(qr)}{qr}\right)$$ and reorganize terms, we have

$$ \mathsf{k}^{\star}(G) =  3 - \frac{1}{p} - \frac{1}{q} - \frac{1}{r} \geq \frac{a(p)}{p} + \frac{a(pq)}{pq} + \frac{a(pr)}{pr} + \frac{a(pqr)}{pqr} + \frac{a(q)}{q} + \frac{a(r)}{r} + \frac{a(qr)}{qr} = \mathsf{k}(S)$$
\noindent which proves Step 1 for $G = C_p \bigoplus C_q \bigoplus C_r$.


\textit{Step 2.} We now show that each zero-sum free sequence $S$ with $\ms{k}(S) = \ms{k}^\star (G)$ has the form $S = S_p S_q S_r$ where $S_k$ is a zero-sum free sequence over the $k$-primary component of $G$.

Note that in Step 1, we used the fact that $\ms{k}(C_{pq}) = \ms{k}^\star(C_{pq})$ holds with maximal sequences being $S = S_p S_q$. Thus we must have that any zero-sum free sequence $S$ must contain $S_p$ and $S_q$. But the order of $p,q,r$ was arbitrary in Step 1, so we may swap the orders to have that $S$ must also contain $S_r$. Since $\ms{k}(S) = \ms{k}(G) = \ms{k}(S_p) + \ms{k}(S_q) + \ms{k}(S_r)$ and the three sequences are disjoint, they must comprise $S$.

\textit{Step 3.} Now we proceed to prove that $\ms{K}(G) = \ms{K}^\star(G)$. Choose a minimal zero-sum sequence $U$ with $\ms{k}(U) = \ms{K}(G)$.

Do the same transformation as we did in Step 1: replace $U$ with
$$U = U_p U_q U_r U_{pq} U_{pr} U_{qr} U_{pqr} \mapsto U_p Q_1 Q_3 R_2 R_3 = U'.$$
Now if $|R_2 R_3| > 0$, then $U_p Q_1 Q_3$ is a zero-sum free sequence and thus $\ms{k}(U_p Q_1 Q_3) \leq \ms{k}(C_p)$, from which we can proceed as we did in Step 1 to obtain $\ms{k}(S) \leq \ms{k}(G) < \ms{K}(G)$.

Now assume $|R_2 R_3| = 0$. Then $|R_2| = |R_3| = 0$, which implies

$\left\lfloor \frac{\lfloor \{\frac{a(q)+a(pq)}{q}\} + \frac{a(qr) + a(pqr)}{q} \rfloor}{r} + \frac{a(r)+a(pr)}{r}\right\rfloor = \frac{\lfloor \{\frac{a(q)+a(pq)}{q}\} + \frac{a(qr) + a(pqr)}{q} \rfloor}{r} + \frac{a(r)+a(pr)}{r}$ \\
and $\lfloor \{\frac{a(q)+a(pq)}{q}\} + \frac{a(qr) + a(pqr)}{q} \rfloor = \{\frac{a(q)+a(pq)}{q}\} + \frac{a(qr) + a(pqr)}{q}$.

Then we have
\begin{align*}
1 & = \ms{K}(C_p) \geq \ms{k}(U_p Q_1 Q_3) =  \ms{k}(S_p) + \ms{k}(Q_1) + \ms{k}(Q_3) \\
& = \frac{a(p)}{p} + \frac{1}{p} \left\lfloor \frac{a(q)+a(pq)}{q} \right\rfloor + \frac{1}{p} \left( \frac{\{\frac{a(q)+a(pq)}{q}\} + \frac{a(qr) + a(pqr)}{q}}{r} + \frac{a(r)+a(pr)}{r} \right) \\
& \geq \frac{a(p)}{p} + \frac{a(pq)}{pq} + \frac{a(pr)}{pr} + \frac{a(pqr)}{pqr} + \frac{a(q)}{pq} + \frac{a(r)}{pr} + \frac{a(qr)}{pqr} - \frac{1}{p}\left( 1 - \frac{1}{q} \right)\left( 1 - \frac{1}{r} \right)
\end{align*}

Now since $U_q U_r U_{qr}$ can't constitute $U$ ($U$ has maximal cross number among irreducible zero-sum sequences by construction), we have
$$\ms{k}(U_q U_r U_{qr}) \leq \ms{k}(C_{qr}).$$ Thus If we add
$$\frac{p-1}{p} \left( 1 - \frac{1}{q} + 1 - \frac{1}{r} \right) \geq \frac{p-1}{p} \left( \frac{a(q)}{q} + \frac{a(r)}{r} + \frac{a(qr)}{qr}\right)$$ to both sides and rearrange terms, we have
$$\ms{K}^\star (G) = 3 - \frac{1}{p} - \frac{1}{q} - \frac{1}{r} + \frac{1}{pqr} \geq \ms{k}(U),$$
which is exactly the desired inequality.

\textit{Step 4.} Now we show the structural result, which immediately follow from Step 3: the equality conditions yield $U = g S_p S_q S_r$ for appropriate $S_p, S_q, S_r$ which are zero-sum free sequences of $C_p, C_q, C_r$ respectively.
\end{proof}

\section{Concluding Remarks}

While a verification of the conjecture for more families of groups seems within reach, a full proof of Conjecture~\ref{conj:main} seems far away. The study of minimal zero-sum sequences over general finite abelian groups is limited by two obstacles, namely the number of prime divisors of $\exp(G)$ and the rank of $G$. Previous works such as Theorem~\ref{thm:prevresult} and Theorem~\ref{thm:hegroup} prove Conjecture~\ref{conj:main} for groups with $\exp(G)$ having only one prime divisor (\cite{G2}), two divisors with some restrictions (\cite{GS2}), or small rank (\cite{GS2}, \cite{H}). Our work verifies Conjecture~\ref{conj:main} for more families of groups with $\exp(G)$ having two prime divisors (Theorem~\ref{thm:result2}) and groups of rank 2 (Theorem~\ref{thm:result1}). The study is especially obstacled when both the rank and the number of prime divisors of $\exp(G)$ are large. In this section, we note some observations that may help resolve this obstacle.

Note that the proof of Theorem~\ref{thm:result2} immediately implies the following:

\begin{cor}
\label{cor:nocross}
For $G = C_p \bigoplus C_q \bigoplus C_r$ and $G = H_p \bigoplus C_{q^m}$, the only minimal zero-sum sequences $S$ that satisfy $\mathsf{k}(S) = \mathsf{K}(G)$ are sequences $S$ with $S = g S'$, where $g$ is an element of order $\exp(G)$ and $S'$ is a zero-sum sequence without terms of order divisible by two or more primes.
\end{cor}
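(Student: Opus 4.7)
The plan is to unpack the structural conclusions of Step~4 of Theorem~\ref{thm:result2} in each of the two families; Corollary~\ref{cor:nocross} is essentially a repackaging of those conclusions into a uniform statement, so no new machinery is needed.

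For $G = H_p \bigoplus C_{q^m}$, Step~4 of the proof of Theorem~\ref{thm:result2} factors any minimal zero-sum sequence with $\ms{k}(U) = \ms{K}(G)$ as $U = g \cdot U_p \cdot U_q$, where the ``remainder'' element $g$ was shown at the end of Step~4 to satisfy $\ord(g) = p^k q^m = \exp(G)$ (its $H_p$-projection has order $p^k$ and its $C_{q^m}$-projection has order $q^m$), while $U_p$ is zero-sum free over $H_p$ and $U_q$ is zero-sum free over $C_{q^m}$. I would then set $S' = U_p U_q$. Every element of $S'$ has order a prime power ($p^i$ or $q^j$), so no term is divisible by two or more primes, and $S'$ is zero-sum free because $U = gS'$ is minimal zero-sum.

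For $G = C_p \bigoplus C_q \bigoplus C_r$, Step~4 of Theorem~\ref{thm:result2} gives $U = g \cdot S_p \cdot S_q \cdot S_r$ with $S_p, S_q, S_r$ zero-sum free over $C_p, C_q, C_r$ respectively; I would take $S' = S_p S_q S_r$ and then every term of $S'$ has prime order, and $S'$ is zero-sum free since $U = gS'$ is minimal zero-sum. To confirm $\ord(g) = pqr = \exp(G)$, I would project $U$ onto each primary component: if the image of $g$ in some $C_\ell$ were $0$, then either $S_\ell$ would have zero sum (contradicting zero-sum freeness) or $S_\ell$ would be empty, forcing $U$ into a proper subgroup of $G$ whose associated invariant $\ms{K}^{\star}$ is strictly smaller than $\ms{K}^{\star}(G)$, contradicting $\ms{k}(U) = \ms{K}(G)$.

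The main ``obstacle'' is purely bookkeeping: confirming that the distinguished element $g$ has full order $\exp(G)$ rather than a proper divisor. In the first case this is already explicit at the end of Step~4 of Part~1 of Theorem~\ref{thm:result2}; in the second case it follows from the short projection argument sketched above. Beyond this verification, the corollary is a direct consequence of the structural halves of Theorem~\ref{thm:result2}.
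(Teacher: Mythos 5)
Your proposal is correct and matches the paper's treatment: the paper offers no separate argument for Corollary~\ref{cor:nocross}, presenting it as an immediate repackaging of Step~4 of both parts of the proof of Theorem~\ref{thm:result2}, which is exactly the unpacking you carry out (with $S'$ read as zero-sum \emph{free}, surely the intended meaning since $U=gS'$ minimal zero-sum and $g \neq 0$ preclude $\sigma(S')=0$). Your extra verification that $g$ has order $pqr$ in the $C_p \bigoplus C_q \bigoplus C_r$ case is a harmless and reasonable filling-in of a detail the paper leaves implicit at the end of Step~4 of Part~2.
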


This motivates us to conjecture the following strengthening of Conjecture~\ref{conj:main}, with respect to the \textit{structure} of zero-sum free sequences and minimal zero-sum sequences:

\begin{conj}
\label{conj:nocross}
Let $G$ be a finite abelian group with primes dividing $\exp(G)$ be $p_1, p_2, \cdots, p_s$. Then we have
\be
\ii Each minimal zero-sum sequence $U$ over $G$ with $\ms{k}(U) = \ms{K}(G)$ has the form $U = g \prod U_i$ where $U_i$ is a zero-sum free sequence over the $p_i$-primary component of $G$ for each $i \in [1,s]$.
\ii In particular, $\ms{K}(G) = \frac{1}{\exp(G)} + \ms{k}^\star (G)$ and each zero-sum free seuqnece $S$ with $\ms{k}(S) = \ms{k}^\star (G)$ has the form $S = \prod S_i$ where $S_i$ is a zero-sum free sequence over the $p_i$-primary component of $G$ for each $i \in [1,s]$.
\ee
\end{conj}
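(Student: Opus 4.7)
The plan is to prove Conjecture~\ref{conj:nocross} by induction on the number $s$ of prime divisors of $\exp(G)$, building directly on the projection and amalgamation framework of the proof of Theorem~\ref{thm:result2}. The base case $s = 1$ is exactly Theorem~\ref{thm:prevresult}(1) for a finite abelian $p$-group, and the structural statement is vacuous in that case since any maximizing sequence is already its own $p$-primary part. For the inductive step, given $G$ with prime divisors $p_1 < \cdots < p_s$ of $\exp(G)$, I would single out $q = p_s$ and decompose $G = H \oplus K$, where $K$ is the $q$-primary component of $G$ and $H$ the direct sum of the remaining primary components. The inductive hypothesis applied to $H$ furnishes both $\ms{k}(H) = \ms{k}^\star(H)$ and the decomposition of maximizing zero-sum free sequences over $H$ into primary pieces, while $\ms{k}(K) = \ms{k}^\star(K)$ is supplied by Theorem~\ref{thm:prevresult}(1) applied to $K$.

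The main body of the argument would mimic Step~1 of the proof of Theorem~\ref{thm:result2}, Part~1, with $H$ playing the role of $H_p$ and $K$ playing the role of $C_{q^m}$. Given a dense zero-sum free sequence $S$ over $G$, I would stratify its terms by $H$-order, enumerate the divisors $h_0 = 1, h_1, \ldots, h_N$ of $\exp(H)$ in a linear extension of divisibility, and inductively project the ``current residue together with the next $H$-stratum'' onto $K$ to extract zero-sum $K$-subsequences. Since $\ms{k}(K) = \ms{k}^\star(K)$, any $K$-projection with cross number exceeding $\ms{k}^\star(K)$ must contain a zero-sum subsequence, whose preimage can be amalgamated into a single element of $H$-order dividing the current $h_i$. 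Iterating through the linear extension eventually produces a zero-sum free sequence supported entirely in $H$, to which the inductive hypothesis applies to bound $\ms{k}(S) - \ms{k}^\star(K)$ by $\ms{k}^\star(H)$.

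The main obstacle will be generalizing Lemma~\ref{lem:floorsum1} and Lemma~\ref{lem:floorsum2} from the chain weighting $1/p^i$ to a $1/h$-weighting over the divisor lattice of $\exp(H)$. When $H$ is cyclic (or more generally a $p$-group) the chain of weights telescopes cleanly and a single residual $\{s_{i-1}\}$ feeds the next step, but for general $H$ the weights $1/h$ run over a partially ordered set and residuals from amalgamations at incomparable $h, h'$ can interact. I expect this can be circumvented by choosing the linear extension greedily so that each amalgamation at $h$ contributes its residual to a unique designated successor in the enumeration, recovering a recursion of exactly the form handled by Lemma~\ref{lem:floorsum1}; the correction term $(1/b - 1)/p$ should then become $(1/\exp(K) - 1)/\exp(H)$ at each inductive step and, aggregating over the induction, should yield the gain $1/\exp(G)$ needed for Conjecture~\ref{conj:nocross}. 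Once the cross number bound is in hand, Steps~2 and~4 of the proof of Theorem~\ref{thm:result2} transfer essentially verbatim: tightness of the analogue of the inequality in Equation~\ref{eqn:8-1IH} forces $\Sigma(S_K)$ to contain every nonzero element of $K$, whence the residual sequence projects to a zero-sum free sequence in $H$ that decomposes by the inductive hypothesis, and the minimal zero-sum case isolates a single order-$\exp(G)$ element $g$ via the $t = 1$ argument at the end of Step~4.
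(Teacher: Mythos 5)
You should first be aware that the statement you are proving is Conjecture~\ref{conj:nocross}: the paper does not prove it, and explicitly presents it as a \emph{strengthening} of the open Conjecture~\ref{conj:main}, offered in the concluding remarks as motivation for future work. So there is no proof in the paper to match; a complete argument along your lines would be a substantial new theorem, and it therefore has to be judged on its own. Judged that way, it has a genuine gap, and it sits exactly where you yourself flag it: the generalization of Lemma~\ref{lem:floorsum1} and Lemma~\ref{lem:floorsum2} beyond the chain weighting $1/p,1/p^2,\dots$ is not a technical inconvenience to be ``circumvented'' but the entire difficulty, and the paper's concluding remarks identify it (large rank combined with many prime divisors of $\exp(G)$) as the known obstruction.

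Concretely, two independent things break in your inductive step. First, your $K$ is the full $p_s$-primary component, an arbitrary $q$-group, whereas the paper's argument needs the cyclic factor $C_{q^m}$ precisely because $\ms{k}(C_{q^m})=1-1/q^m<1$: every projected subsequence of cross number at least $1$ then contains a zero-sum subsequence, each extracted zero-sum piece over a $q$-group has cross number at least $1$ and (being minimal) at most $\ms{K}(C_{q^m})=1$, and this pins the number of amalgamated elements at exactly $\lfloor \ms{k}(\tau(\cdot))\rfloor$, which is what feeds the floor recursion. For non-cyclic $K$ the extraction threshold rises to $\ms{k}^\star(K)$, each piece can cost up to $\ms{K}(K)>1$ in cross number while contributing only one element of weight $1/h$ to the $H$-side, and the count of pieces degrades to roughly $(x-\ms{k}(K))/\ms{K}(K)$ rather than $\lfloor x\rfloor$; the inequality then acquires slack in the wrong direction and the final $1/\exp(G)$ gain is not recovered. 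Second, for $s\ge 3$ the $H$-orders form a genuine divisor lattice: amalgamating a piece whose terms have incomparable $H$-orders $h,h'$ produces an element of order dividing $\lcm(h,h')$, hence of weight only $1/\lcm(h,h')$, and the residual $\{s_{i-1}\}$ from one stratum has no canonical ``unique designated successor'' to feed --- the telescoping in Lemma~\ref{lem:floorsum1} uses the geometric decay of $1/p^i$ along a chain and does not survive an arbitrary linear extension. This is why the paper handles $C_p\bigoplus C_q\bigoplus C_r$ by a separate, bespoke two-stage nested-floor computation rather than by iterating the $H_p\bigoplus C_{q^m}$ argument. Your outline is a reasonable description of what one would \emph{like} to do, and the reduction of the structural statements (Steps 2 and 4) to the quantitative bound is sound, but the quantitative bound itself is exactly the open problem and your sketch does not close it.
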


To see that Conjecture~\ref{conj:nocross} implies Conjecture~\ref{conj:main}, note that Conjecture~\ref{conj:nocross} immediately implies $\mathsf{k}(\bigoplus_{i=1}^{r} H_{p_i}) = \sum_{i=1}^{r} \mathsf{k}(H_{p_i})$ for any $p_i$-groups $H_{p_i}$ and apply Theorem~\ref{thm:prevresult} for $p$-groups.

Conjecture~\ref{conj:nocross} motivates us to weight sequences in different manners, because if the conjecture is true, the coefficients on elements of order divisible by two or more primes do not matter for maximal zero-sum free sequences. Consider any arbitrary weighting function $f$ on the positive integers. Extending He's notation (\cite{H}), if $S$ is a sequence over a finite abelian group $G$ define

$$\mathsf{k}(S,f) = \sum_{g \in G} v_g (S) f(\ord(g)),$$
\noindent and define $\mathsf{k}(G,f)$ naturally. We know that $f = f_1(n) = \frac{1}{n}$ correspond to the cross number we know, but if we define $f = f_g(n) = \frac{g(n)}{n}$ where $g$ is a function from the natural numbers that satisfy $g(p^k) = 1$ for all prime powers $p^k$, then Conjecture~\ref{conj:nocross} implies that $\mathsf{k}(S,f_0) = \mathsf{k}(S, f_g)$ for sequences $S$ such that $\mathsf{k}(S) = \mathsf{k}(G)$. An appropriate choice of weighting functions $f,g$ might shed light on both Conjecture~\ref{conj:main} and Conjecture~\ref{conj:nocross}.

\section{Acknowledgements}
This research was supervised by Joe Gallian at the University of Minnesota Duluth REU, supported by the National Science Foundation (grant number DMS-1358659) and the National Security Agency (grant number H98230-13-1-0273). I would like to thank the Department of Mathematics at Princeton University, and the Samsung Scholarship Foundation for their funding for the independent work. I would like to thank Joe Gallian for his advices and support. I would also like to thank the program advisors Adam Hesterberg, Noah Arbesfeld, Daniel Kriz, and fellow participant Xiaoyu He for their valuable suggestions to this paper. I also would like to thank the anonymous referee for extremely helpful advices and comments on the paper, and Benjamin Girard for his e-mail communication and advices.

\end{document}